\newtheorem{theorem}{Theorem}[section]
\newtheorem{lemma}[theorem]{Lemma}
\newtheorem{definition}[theorem]{Definition}
\newtheorem{proposition}[theorem]{Proposition}
\newtheorem{corollary}[theorem]{Corollary}
\newtheorem{remark}[theorem]{Remark}
\newtheorem{example}[theorem]{Example}
\newcommand{\m}{\mathfrak m}
\newcommand{\n}{\mathfrak n}
\newcommand{\p}{\mathfrak p}
\def\opn#1#2{\def#1{\operatorname{#2}}} 
\opn\spec{Spec}
\opn\depth{depth}
\opn\height{ht}
\opn\chara{char}
\opn\gr{gr}
\opn\ord{ord}
\opn\Ap{Ap}
\opn\F{F}
\opn\PF{PF}
\opn\L{L}
\opn\Max{Max}
\title{When is $\m:\m$ an almost Gorenstein ring?}
\author{Marco D'Anna}
\address{Marco D'Anna - Dipartimento di Matematica e Informatica - Universit\`a degli Studi di Catania - Viale Andrea Doria 6 - 95125 Catania - Italy}\email{mdanna@dmi.unict.it}
\author{Francesco Strazzanti}
\address{Francesco Strazzanti - Dipartimento di Matematica e Informatica - Universit\`a degli Studi di Catania - Viale Andrea Doria 6 - 95125 Catania - Italy}
\email{francesco.strazzanti@gmail.com}
\thanks{The first author was supported by the project ``Propriet\`a algebriche locali e globali di anelli associati a curve e  
ipersuperfici'' PTR 2016-18 - Dipartimento di Matematica e Informatica - Universit\`a degli Studi di Catania \\
The second author was supported by INdAM, more precisely he was ``titolare di un Assegno di Ricerca dell'Istituto Nazionale di Alta Matematica''.}
\subjclass[2010]{13H10, 13A15}
\date{\today}
\begin{document}

\begin{abstract}
Given a one-dimensional Cohen-Macaulay local ring $(R,\m,k)$, we prove that it is almost Gorenstein if and only if $\m$ is a canonical module of the ring $\m:\m$. Then, we generalize this result by introducing the notions of almost canonical ideal and gAGL ring
and by proving that $R$ is gAGL if and only if $\m$ is an almost canonical ideal of $\m:\m$. 
We use this fact to characterize when the ring $\m:\m$ is almost Gorenstein, provided that $R$ has minimal multiplicity. This is a generalization of a result proved by Chau, Goto, Kumashiro, and Matsuoka in the case in which $\m:\m$ is local and its residue field is isomorphic to $k$. 
\end{abstract}

\keywords{Almost Gorenstein ring, 2-AGL ring, canonical ideal, GAS numerical semigroup.}

\maketitle

\section*{Introduction}

Gorenstein rings, among Cohen-Macaulay rings, are a very important class from many points of view:
few years after the definition of Gorestein subvarieties, given by Grothendieck, the famous paper of Bass
``On the ubiquity of Gorenstein rings'' \cite{Bass} showed their prominence in many contexts of commutative and homological algebra. In particular, one-dimensional Gorenstein rings play an important role also in multiplicative ideal theory.

It is well-known that a local ring is Gorenstein if and only if it is Cohen Macaulay and has type equal to one. Since it is possible to construct Cohen Macaulay rings with type $n$ for any $n \geq 1$, it is natural to look for intermediate classes between Gorenstein and Cohen Macaulay rings, closed to Gorenstein rings under 
some respect. Many definitions of such classes were proposed, especially in the last two decades, e.g. almost Gorenstein \cite{BF}, $n$-AGL \cite{CGKM}, nearly Gorenstein \cite{HHS} and generalized Gorenstein \cite{GIKT}; see also \cite{HV} for other classes. 
The notion of almost Gorenstein ring has been probably the most studied. It goes back to 1997, when Barucci and Fr\"oberg introduced it in the analytically unramified case, starting by defining the corresponding notion of almost symmetric semigroup, in numerical semigroup theory. Since then, the research on these rings has been greatly developed and their definition has been generalized first in the one-dimensional case \cite{GMP} and later in the higher dimensional case \cite{GTT}. 

There is a very interesting connection between a one-dimensional almost Gorenstein local ring $(R,\mathfrak m)$ and its endomorphism algebra $B=\mathfrak m:\mathfrak m$. In fact it has been proved in \cite{GMP} that $R$ is a one-dimensional almost Gorenstein ring and has minimal multiplicity if and only if $B$ is Gorenstein. We generalize this fact as follows (see Proposition \ref{Corollary of canonical ideal of m:m}):
$R$ is a one-dimensional almost Gorenstein ring if and only if $\mathfrak m$ is a canonical ideal for $B$. This fact was proved also in the non-noetherian context by Barucci in 
\cite{B} with an additional hypothesis.

The $R$-algebra $B$, in the one-dimensional case, is interesting to study in connection with $R$. In fact it is a step toward the blowing up $R^{\mathfrak m}$ of $\mathfrak m$ 
and it coincides with it if $R$ has minimal multiplicity.
So one can expect that $B$ should be in some way better than $R$, even if one can lose the locality. 
We also remark that, in the one-dimensional case, the algebra $B$ or, more generally, the endomorphism 
ring of an ideal has been studied in many situations to characterize properties of $R$ (see, for example,
\cite{SV}). 

In \cite{CGKM} Chau, Goto, Kumashiro, and Matsuoka proposed the notion of $n$-Almost Gorenstein local ring, briefly $n$-AGL, in order to obtain a stratification of one-dimensional Cohen-Macaulay local rings. More precisely, every such ring is $n$-AGL for some $n$ and Gorenstein rings correspond to the 0-AGL rings, whereas a ring is 1-AGL if and only if it is almost Gorenstein ring but not Gorenstein. Therefore, in this perspective 2-AGL rings are the ones closer to be almost Gorenstein and for this reason their properties have been more studied, see, e.g., \cite{GIT}.
In particular, the notion of 2-AGL gives a partial answer to a natural question that arise studying the
endomorphism algebra $B$: can we characterize when it is almost Gorenstein?
Indeed, in \cite[Corollary 5.3]{CGKM}, it has been shown that if $B$ is local with maximal ideal $\n$ and $R$ has minimal multiplicity, then $R$ is 2-AGL if and only if  $R/\m \cong B/\n$ and $B$ is almost Gorenstein but not Gorenstein.\\

In this paper we deal with the following two questions: \\
- If $R$ has minimal multiplicity, when is $B=\m:\m$ almost Gorenstein?\\
- If we drop the hypothesis of minimal multiplicity, can we prove an analogue of Proposition \ref{Corollary of canonical ideal of m:m}? More precisely, Proposition \ref{Corollary of canonical ideal of m:m} shows that $R$ is a one-dimensional almost Gorenstein ring if and only if $\mathfrak m$ is a canonical ideal for $B$; so we look for a notion of almost canonical ideal, such that a ring is almost Gorestein if and only if it is an almost canonical ideal of itself, and for a larger class of rings, such that $R$ belongs to this class if and only if $\m$ is an almost canonical ideal of $B$.

To these aims we introduce the notion of \emph{almost canonical ideal} which generalizes the concept of canonical ideal in the same way almost Gorenstein rings generalize Gorenstein ones.
Then, we propose the definition of \emph{generalized almost Gorenstein local ring}, briefly gAGL ring, that is a class of rings which includes
properly Gorenstein, almost Gorenstein and 2-AGL local rings and we prove that $R$ is gAGL if and only if $\m$ is an almost canonical ideal of $B$. 
From this result it descends that $B$ is almost Gorenstein if and only if $R$ is gAGL, provided that $R$ has minimal multiplicity. We also obtain that $R$ gAGL implies $B$ almost Gorenstein even if $R$ has not minimal multiplicity.

Some of these results and definitions generalize corresponding facts and concepts given in the numerical
semigroups context (\cite{DS}). Let us notice that this generalization is not straightforward, since we have to deal with two problems that do not appear looking at numerical semigroups: the fact that $B$ could not be local anymore
and the non-residually rational case, i.e. $R/\m \not\cong B/\n_i$, where $n_i$ are the maximal 
ideals of $B$.

The structure of the paper is the following. In Section \ref{Section ideal} we define the almost canonical ideals of a one-dimensional ring, we prove that a local ring $R$ is almost Gorenstein if and only if $R$ is an almost canonical ideal of itself, and we give some characterizations that will be useful in the last section. 
In particular, we show that a ring is almost Gorestein if and only if it is an almost canonical ideal of itself
(Corollary \ref{almost canonical and almost Gorenstein}) and we explain why almost canonical ideals generalize canonical ones in the same way almost Gorenstein rings generalize Gorenstein ones (Remark \ref{type}). 

In the following section we introduce the notion of gAGL ring and, after showing that almost Gorenstein and 2-AGL rings are gAGL, we prove that if $R$ has minimal multiplicity and $B$ is local, the ring $R$ is 2-AGL if and only if $R$ is gAGL and the residue fields of $R$ and $B$ are isomorphic, see Proposition \ref{2-AGL and gAGL}. We also show some examples of gAGL rings that are not 2-AGL, both when $B$ is local and not. 

In the last section we describe the canonical ideal of $B$ (Proposition \ref{Canonical ideal of m:m}) and we use it to prove that $R$ is a one-dimensional almost Gorenstein ring if and only if $\mathfrak m$ is a canonical ideal for $B$ and to give a simpler proof of \cite[Theorem 5.1]{GMP} (Proposition \ref{Corollary of canonical ideal of m:m}). Successively, we prove the main result of the paper (Theorem \ref{main}): $R$ is gAGL if and only if $\m$ is an almost canonical ideal of $B$. Finally, in Corollary \ref{final corollary}, we specialize this result to characterize the almost Gorensteinness of $B$.

\section{Almost canonical ideals of a one-dimensional ring} \label{Section ideal}

Throughout the paper $R$ will be a one-dimensional Cohen-Macaulay ring and, if $I$ and $J$ are two fractional ideals of $R$, we set $I:J=\{r \in Q(R) \mid rJ \subseteq I\}$, where $Q(R)$ denotes the total ring of fractions of $R$; we will always deal with fractional ideals containing an invertible of $Q(R)$, so we will not explicitly mention this hypothesis.
Our first goal is to introduce the notion of almost canonical ideal in this setting, but first it is convenient to recall the corresponding notion for numerical semigroups.

A numerical semigroup $S$ is an additive submonoid of $\mathbb{N}$ such that $\mathbb{N} \setminus S$ is finite. Hence, there exists $\F(S)=\max(\mathbb{N}\setminus S)$ which is called Frobenius number of $S$.  
A set $E\subseteq \mathbb{Z}$ is said to be a relative ideal of $S$ if there exists $s \in S$ such that $s+E \subseteq S$ and $E+S \subseteq E$. It is easy to see that also $\mathbb{Z}\setminus E$ has a maximum and we denote it by $\F(E)$.
For example $M=S \setminus \{0\}$ and $K(S)=\{z \in \mathbb{N}\mid \F(S)-z \notin S\}$ are relative ideals of $S$, called the maximal and the standard canonical ideal of $S$ respectively.
Given two relative ideals $E$, $F$ of $S$, we set $E-F=\{z \in \mathbb{Z} \mid z+F \subseteq E\}$.
Moreover, we also set $\widetilde{E}=E+\F(S)-\F(E)$ to be the unique relative ideal isomorphic to $E$ such that $\F(\widetilde{E})=\F(S)$.

In \cite{DS} a relative ideal $E$ of $S$ is said to be almost canonical if $\widetilde{E}-M=K(S) \cup \{\F(S)\}$. This notion was introduced in order to generalize the concept of almost symmetric numerical semigroup, since $S$ is almost symmetric if and only if $S-M=K(S) \cup \{\F(S)\}$. In \cite[Proposition 2.4]{DS} some equivalent definitions are shown, in particular it is proved that $E$ is almost canonical if and only if $K(S)-(M-M) \subseteq \widetilde{E}$.

To generalize this definition we note that the properties of $S$ are strictly related to those of the one-dimensional local domain $k[[S]]=k[[t^s \mid s \in S]]$, where $k$ is a field and $t$ is an indeterminate. For instance, a fractional ideal $I$ of $k[[S]]$ corresponds to the relative ideal $v(I)$ of $S$, by setting $v(I)$ to be the set of the orders of the elements in $I$. Given a canonical module $\omega_{k[[S]]}$ of $k[[S]]$ such that $k[[S]] \subseteq \omega_{k[[S]]} \subseteq \overline{k[[S]]}=k[[t]]$, it follows that $v(\omega_{k[[S]]})=K(S)$ by \cite[Satz 5]{J}. Moreover, if $I \subseteq k[[S]]$ is an ideal and $(x)$ is a minimal reduction of $I$, then $v(x)=\min (v(I))$.  
Finally, we recall that if $I$ and $J$ are monomial ideals of $k[[S]]$, then $v(I:J)=v(I)-v(J)$.

Now come back to our ring $R$ and assume for a while that $(R,\m,k)$ is also local with $k$ infinite and that there exists a canonical module $\omega_R$ of $R$ with $R \subseteq \omega_R \subseteq \overline{R}$. 
If $I$ is a fractional ideal of $R$, there exist a regular element $y$ of $R$ such that $J=y(\omega_R:I) \subseteq R$ and a minimal reduction $(x)$ of $J$. We say that $z=x/y\in (\omega_R:I)$ is a reduction of $\omega_R:I$ and we fix this element throughout this section. We also notice that there exists a regular $r \in R$ such that $r I \subseteq R \subseteq \omega_R$, so $\omega_R:I$ is a regular ideal and, then, $x$ and $z$ are always non-zero divisors in $Q(R)$. 

If $R=k[[S]]$ and $I$ is a monomial ideal, then $v(z)=v(x)-v(y)=v(y)+\min(K(S)-v(I))-v(y)=\F(S)-\max(\mathbb{Z} \setminus v(I))=\F(S)-\F(v(I))$, where the penultimate equality follows by \cite[Hilfssatz 5]{J}. This means that $v(z)+v(I)=\widetilde{v(I)}$ and, thus, we can generalize the notion of almost canonical ideal using $zI$ in place of $\widetilde{v(I)}$. We denote the fractional ideal $\omega_R:I$ by $I^{\vee}$.

\begin{definition} \rm
If $(R,\m)$ is a Cohen Macaulay local ring, with infinite residue field and with a canonical module $\omega_R$ such that $R \subseteq \omega_R \subseteq \overline{R}$, we say that $I$ is almost canonical if $\omega_R:(\m:\m) \subseteq zI$ or equivalently $z^{-1}I^{\vee} \subseteq (\m:\m)$. If $R$ is not local, we say that $I$ is almost canonical if $I_{\m}$ is an almost canonical ideal of $R_{\m}$ for every maximal ideal $\m$ of $R$.  
\end{definition}

Clearly if $R=k[[S]]$ and $I$ is a monomial ideal of $R$, then $I$ is an almost canonical ideal of $R$ if and only if $v(I)$ is an almost canonical ideal of $S$.  
We point out that in the definition above we are assuming that for every maximal ideal $\m$ of $R$ there exists a canonical module $\omega_{R_\m}$ of $R_\m$ such that $R_\m \subseteq \omega_{R_\m} \subseteq \overline{R_\m}$. 

\begin{remark} \rm \label{First remark}
{\bf 1.} If $R$ is local and $\omega'_{R}$ is another canonical module of $R$ included between $R$ and $\overline{R}$, then $\omega'_{R}=r\omega_{R}$ for some invertible element $r \in Q(R)$. Therefore, it is easy to see that the definition above does not depend on the chosen canonical module. 
Moreover, it is also independent of the choice of $z$. In fact, if $y$ and $y'$ are two regular elements such that 
$yI^{\vee}, y'I^{\vee} \subseteq R$ and $x$ is a minimal reduction of $yI^{\vee}$, then $xy'/y$ is a minimal
reduction of $y'I^{\vee}$ and $z=x/y=(xy'/y)(1/y')$; hence, it suffices to check that if we choose two minimal
reductions $x,x'$ of $yI^{\vee}$, then $z^{-1}I^{\vee} \subseteq (\m:\m)$ if and only if 
$z'^{-1}I^{\vee} \subseteq (\m:\m)$,
with $z'=x'/y$. This equivalence is verified, because the blowing-up of $yI^{\vee}$ is $R[yI^{\vee}/x]=
R[yI^{\vee}/x']$, i.e. 
$R[z^{-1}I^{\vee}]=R[z'^{-1}I^{\vee}]$, and $\m:\m$ is a ring. \\ 
{\bf 2.} Every canonical ideal is almost canonical. Indeed, if $I=r \omega_R$ for some invertible element $r\in Q(R)$, then it is straightforward to see that we can choose $z=r^{-1}$ and $\omega_R:(\m:\m) \subseteq \omega_R$ is equivalent to $R \subseteq (\m:\m)$. \\ 
{\bf 3.} If $I$ and $J$ are two isomorphic fractional ideals of $R$, then $I$ is almost canonical if and only if $J$ is almost canonical. To show this we note that $J=(r/s)I$ for some regular elements $r,s \in R$ and that we can easily reduce to assume that $R$ is local. Let $y \in R$ be a regular element such that $y(\omega_R:I)\subseteq R$ and let $x$ be a minimal reduction of $y(\omega_R:I)$. Then, $ry(\omega_R:J)=sy(\omega_R:I)\subseteq R$ and $sx$ is a minimal reduction of $ry(\omega_R:J)$. Hence, $\omega_R:(\m:\m) \subseteq (x/y)I$ if and only if $\omega_R:(\m:\m) \subseteq (sx/ry)J=(x/y)I$. \\ 
{\bf 4.} Since $z \in (\omega_R:I)$, it always holds that $zI \subseteq \omega_R$. Therefore, almost canonical ideals are the ideals satisfying $\omega_R:(\m:\m) \subseteq zI \subseteq \omega_R$, i.e. $R \subseteq z^{-1}I^{\vee} \subseteq (\m:\m)$. \\
{\bf 5.} In \cite{BDS2} almost canonical ideals naturally appear characterizing the almost Gorenstein property of some quadratic quotients of the Rees algebra $R[It]$ of $R$ with respect to a proper ideal $I$. More precisely, if $a$, $b \in R$, let $J$ denote the contraction of the ideal $(t^2+at+b)R[t]$ to $R[It]$ and let $R(I)_{a,b}=R[It]/J$; see \cite{BDS,BDS2, OST} for the importance of this family of rings. If $R$ is a one-dimensional Cohen-Macaulay local ring, \cite[Corollary 2.4]{BDS2} says that $R(I)_{a,b}$ is an almost Gorenstein ring if and only if $I$ is an almost canonical ideal of $R$ and $z^{-1}I^{\vee}$ is a ring. 
\end{remark}

\begin{lemma} \label{z=1}
Let $I \subseteq \overline{R}$ be a fractional ideal of $R$ containing $1$. Then, a reduction of the ideal $I_{\m}$ of $R_{\m}$ is $1$ for every maximal ideal $\m$ of $R$. 
\end{lemma}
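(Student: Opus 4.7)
The plan is to reduce to showing that $1$ itself is a reduction of $I$ as a fractional ideal of $R$, and then obtain the localized statement by flat base change. First, since $R$ is Noetherian and $I$ is a fractional ideal, $I$ is finitely generated as an $R$-module; using the hypotheses $1\in I$ and $I\subseteq\overline{R}$, I would write $I=(1,a_1,\ldots,a_s)R$ with each $a_i\in\overline{R}$, so that each $a_i$ is integral over $R$. Then the subring $R[a_1,\ldots,a_s]\subseteq Q(R)$ is a finitely generated $R$-module by the standard fact that a finitely generated algebra consisting of integral elements is module-finite.

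Next, I would identify $R[a_1,\ldots,a_s]$ with $R[I]=\bigcup_{n\ge 1}I^n$, which is legitimate because $I$ is generated as an $R$-module by $1$ together with the $a_i$'s and therefore $R[I]=R[a_1,\ldots,a_s]$. Consequently $R[I]$ is a finitely generated, hence Noetherian, $R$-module. Because $1\in I$, multiplying by $1$ gives $I^n=1\cdot I^n\subseteq I\cdot I^n=I^{n+1}$, so the chain $I\subseteq I^2\subseteq I^3\subseteq\cdots$ is ascending inside $R[I]$ and must stabilize: there exists $n$ with $I^{n+1}=I^n=1\cdot I^n$. By definition this says that $1$ is a reduction of $I$.

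Finally, localizing the equality $I^{n+1}=I^n$ at any maximal ideal $\m$ of $R$ gives $(I_\m)^{n+1}=(I_\m)^n=1\cdot(I_\m)^n$, and since $1\in I_\m$, this is exactly the statement that $1$ is a reduction of $I_\m$. No substantial obstacle appears; the only point that deserves attention is recognizing that the hypothesis $I\subseteq\overline{R}$ is what forces $R[I]$ to be module-finite over $R$, and this module-finiteness is the single ingredient that makes the chain of powers of $I$ stabilize.
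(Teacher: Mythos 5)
Your proposal is correct and follows essentially the same line of reasoning as the paper: both arguments show that $R[I]$ is a finitely generated $R$-algebra contained in $\overline{R}$, hence module-finite over $R$, then invoke the Noetherian property to stabilize the ascending chain $I\subseteq I^2\subseteq\cdots$ and conclude that $1$ is a reduction. The only cosmetic difference is that the paper reduces to the local case at the outset, whereas you prove $I^{n}=I^{n+1}$ globally and localize at the end; these are interchangeable.
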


\begin{proof}
It is enough to assume that $R$ is local. Since $R$ is noetherian, $R[I]$ is a finitely generated $R$-algebra. Moreover, it is finite over $R$, because $R \subseteq R[I] \subseteq \overline{R}$. This implies that $R[I]$ is a noetherian $R$-module and, thus, the chain $I \subseteq I^2 \subseteq I^3 \subseteq \dots$ stabilizes, i.e. $I^n=I^{n+1}$ for some $n \in \mathbb{N}$. If $y \in R$ is such that $yI \in R$, it follows $(yI)^{n+1}=y^{n+1}I^{n+1}=yy^nI^n=y(yI)^{n}$ and, so, $y/y=1$ is a reduction of $I$.  
\end{proof}

A one-dimensional Cohen-Macaulay local ring $R$ is said to be almost Gorenstein if $\m \omega_R \subseteq \m$, where $\omega_R$ is a canonical module included between $R$ and $\overline{R}$. If $R$ is not local, we say that $R$ is almost Gorenstein if $R_\m$ is almost Gorenstein for every maximal ideal $\m$ of $R$. 
When $I=R$, the previous lemma easily implies that $z=1$ is a reduction of $\omega_{R_\m}:I_{\m}=\omega_{R_\m}$ for every maximal ideal $\m$ of $R$. Hence, $\omega_{R_\m}:(\m R_\m:\m R_\m) \subseteq R_{\m}$ if and only if $\omega_{R_\m} \subseteq (\m R_\m:\m R_\m)$. Hence, we have proved the following:

\begin{corollary} \label{almost canonical and almost Gorenstein}
 A ring $R$ is almost Gorenstein if and only if it is almost canonical as ideal of itself. 
\end{corollary}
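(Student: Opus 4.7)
The statement is essentially established by the discussion immediately preceding it; my plan is to make the last step explicit and to flag the one nontrivial ingredient.

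First I would reduce to the local case, since both ``almost Gorenstein'' and ``almost canonical ideal'' are defined by localizing at each maximal ideal. So I assume $(R,\m)$ is local with a canonical module $\omega_R$ satisfying $R \subseteq \omega_R \subseteq \overline{R}$. Next, I specialize the definition of almost canonical ideal to $I = R$: here $I^{\vee} = \omega_R : R = \omega_R$, and since $\omega_R$ contains $1$ and is contained in $\overline{R}$, Lemma \ref{z=1} (applied to $\omega_R$ in place of $I$) yields that $z = 1$ is a reduction of $\omega_R : R = \omega_R$. Consequently, the almost canonical condition $\omega_R : (\m:\m) \subseteq zI$ becomes simply
\[
\omega_R : (\m:\m) \subseteq R.
\]

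The key step is then to convert this inclusion into the inclusion $\omega_R \subseteq \m:\m$ by canonical-module duality. Since $\m:\m$ contains a regular element of $Q(R)$, the standard reflexivity property $\omega_R : (\omega_R : J) = J$ for such fractional ideals applies. Taking $\omega_R$-duals of both sides of the displayed inclusion (which reverses the direction) and using $\omega_R : R = \omega_R$ on the right, I obtain
\[
\omega_R \;\subseteq\; \omega_R : \bigl(\omega_R : (\m:\m)\bigr) \;=\; \m:\m,
\]
and conversely, any such chain of $\omega_R$-duals can be reversed, so the two inclusions are equivalent.

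Finally, $\omega_R \subseteq \m:\m$ is tautologically the same as $\m \omega_R \subseteq \m$, which is the definition of $R$ being almost Gorenstein. Putting these equivalences together yields the corollary. The only step that is not bookkeeping is the use of canonical-module reflexivity, which is standard in dimension one for fractional ideals containing a regular element; I do not expect any real obstacle, and the main task in writing the proof is to invoke Lemma \ref{z=1} cleanly so that the reduction $z = 1$ is recognized immediately.
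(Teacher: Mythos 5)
Your proposal is correct and follows the same route as the paper: reduce to the local case, observe via Lemma~\ref{z=1} that $z=1$ works when $I=R$, and then pass between $\omega_R:(\m:\m)\subseteq R$ and $\omega_R\subseteq\m:\m$. The paper simply asserts this last equivalence without comment, whereas you supply the justification via canonical-module reflexivity $\omega_R:(\omega_R:J)=J$ (valid since $\m:\m$ contains $1$ and hence a regular element); this is exactly the intended argument, just spelled out.
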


\begin{remark} \rm \label{containment between fractional ideals}
Let $I \subseteq J$ be two fractional ideals of $R$ such that $R \subseteq (\omega_R:J) \subseteq (\omega_R:I) \subseteq \overline{R}$ and assume that $I$ is almost canonical. Lemma \ref{z=1} ensures that $1$ is a reduction of both $(\omega_R:I)_{\m}$ and $(\omega_R:J)_{\m}$ for every maximal ideal $\m$ of $R$ and, then, the definition immediately implies that also $J$ is almost canonical. 
\end{remark}

Now we give two useful criteria to prove that an ideal is almost canonical.

\begin{proposition} Let $(R,\m)$ be a local ring. The following conditions are equivalent:
\begin{enumerate}
\item The ideal $I$ is almost canonical; 
\item $(\omega_R:\m)=(zI:\m)$;
\item $\m\omega_R \subseteq zI$.
\end{enumerate}
\end{proposition}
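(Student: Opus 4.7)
The plan is to reduce all three conditions to the single identity
\[
\omega_R : (\m:\m) \;=\; \m\,\omega_R,
\]
together with its companion $\m(\omega_R:\m) = \m\,\omega_R$, both of which will appear in its proof. Once these are in hand, (1) becomes literally $\m\omega_R \subseteq zI$, which is (3). For (2), note that $z \in \omega_R : I$ yields $zI \subseteq \omega_R$ and hence $zI:\m \subseteq \omega_R:\m$ automatically; so (2) amounts to the reverse inclusion $\omega_R:\m \subseteq zI:\m$, and by colon adjunction this is $\m(\omega_R:\m) \subseteq zI$, which coincides with (3) via the companion identity.

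To prove the key identity the easy half $\m\omega_R \subseteq \omega_R:(\m:\m)$ is immediate: if $a \in \m$, $\omega \in \omega_R$, $c \in \m:\m$, then $ac \in \m \subseteq R$, so $(a\omega)c = \omega(ac) \in \omega_R$. For the reverse I would go through two substeps. The first, purely formal one, is the biduality identity $\omega_R:(\m:\m) = \m(\omega_R:\m)$: writing $x\cdot \m(\omega_R:\m) \subseteq \omega_R$ as $x\m \subseteq \omega_R:(\omega_R:\m) = \m$ (biduality for $\m$) shows that $\omega_R:(\m(\omega_R:\m)) = \m:\m$, and one more application of $\omega_R:(-)$ plus biduality for the fractional ideal $\m(\omega_R:\m)$ yields the claim. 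The substantive second substep is the decomposition
\[
\omega_R : \m \;=\; (\m:\m)\,\omega_R.
\]
The inclusion $\supseteq$ follows from $(\m:\m)\m \subseteq \m$. For $\subseteq$ the key observation is that $(\omega_R:\m)/\omega_R \cong \mathrm{Ext}^{1}_R(k,\omega_R) \cong k$ is one-dimensional, a standard consequence of $\omega_R$ being a dualizing module over a one-dimensional Cohen-Macaulay local ring. Outside the DVR case one has $R \subsetneq \m:\m$, which by reflexivity of $\omega_R$ (i.e.\ $\omega_R:\omega_R = R$) forces $\omega_R \subsetneq (\m:\m)\omega_R \subseteq \omega_R:\m$; since the ambient quotient has no proper nonzero submodule, the inner inclusion is an equality. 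Multiplying by $\m$ and invoking $\m(\m:\m) = \m$ then produces $\m(\omega_R:\m) = \m\omega_R$, and chaining the two substeps gives the key identity.

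I expect the main obstacle to be the substantive identity $\omega_R:\m = (\m:\m)\omega_R$, since it is where the nontrivial input $\mathrm{Ext}^{1}_R(k,\omega_R) \cong k$ enters and where the DVR case has to be treated separately (in the DVR case $\omega_R = R$, $\m$ is principal, $zI = R$ always, and all three conditions hold trivially). The biduality manipulation and the final deductions of the equivalences (1)$\Leftrightarrow$(2)$\Leftrightarrow$(3) are then short and entirely formal.
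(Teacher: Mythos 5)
Your proposal is correct, and the argument is sound, but it routes through the equivalences somewhat differently from the paper. The paper proves (1)$\Leftrightarrow$(2) directly at the level of elements: for (1)$\Rightarrow$(2) it shows $\lambda\in(\omega_R:\m)$ forces $\lambda\m\subseteq\omega_R:(\m:\m)\subseteq zI$, and for (2)$\Rightarrow$(1) it uses biduality in the form $\omega_R:(zI:\m)=\omega_R:(\omega_R:\m)=\m$ to conclude $z^{-1}I^{\vee}\subseteq\m:\m$; no identity $\m(\omega_R:\m)=\m\omega_R$ is invoked. Only for (1)$\Leftrightarrow$(3) does the paper establish the key identity $\omega_R:(\m:\m)=\m\omega_R$ (non-DVR case), and it does so by a shorter chain: it starts from the elementary fact $R:\m=\m:\m$ for a non-DVR, rewrites the left side as $\omega_R:\m\omega_R$ via $\omega_R:\omega_R=R$, and dualizes once. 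Your proof funnels all three conditions through the same pivot $\m\omega_R\subseteq zI$, which requires the extra ``companion'' identity $\m(\omega_R:\m)=\m\omega_R$; you obtain both identities from the finer decomposition $\omega_R:\m=(\m:\m)\omega_R$, whose proof relies on $\ell_R((\omega_R:\m)/\omega_R)=1$ (i.e.\ $\mathrm{Ext}^1_R(k,\omega_R)\cong k$) together with $R\subsetneq\m:\m$ and $\omega_R:\omega_R=R$. Both approaches are valid; the paper's is slightly more economical since the element-level argument for (1)$\Leftrightarrow$(2) sidesteps the length-one input entirely, whereas your version isolates the structurally cleaner identity $\omega_R:\m=(\m:\m)\omega_R$ (not stated in the paper's proof, though the length fact itself does appear later in Remark~\ref{type}), at the cost of invoking $\mathrm{Ext}^1(k,\omega_R)\cong k$ and of having to check the DVR case for (2)$\Leftrightarrow$(3) as well as (1)$\Leftrightarrow$(3). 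Your handling of the DVR case and the biduality manipulations are all correct.
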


\begin{proof}
(1) $\Rightarrow$ (2) The inclusion $(zI:\m) \subseteq (\omega_R:\m)$ is always true because $z \in (\omega_R:I)$. Moreover, if $\lambda \in (\omega_R:\m)$, $m \in \m$ and $n \in (\m:\m)$, then $\lambda m n \in \omega_R$, that is $\lambda m \in (\omega_R:(\m:\m))\subseteq zI$ and, hence, $\lambda \in (zI:\m)$. \\
(2) $\Rightarrow$ (1) Let $\alpha \in z^{-1}I^{\vee}$ and let $m \in \m$. By hypothesis $\omega_R : (zI:\m)=\omega_R : (\omega_R : \m)=\m$. If $\beta \in (zI:\m)$, then $\alpha m \beta \in \alpha zI \subseteq \omega_R$ and, thus, $\alpha m \in (\omega_R: (zI:\m))=\m$. Hence, $\alpha \in (\m:\m)$ and $I$ is an almost canonical ideal. \\
(1) $\Leftrightarrow$ (3)  If $R$ is a DVR with uniformizing parameter $t$, then every non-zero fractional ideal of $R$ is generated by $t^n$ for some $n \in \mathbb{Z}$. It is straightforward to check that in this case we can choose $z=t^{-n}$. Therefore, $\m R \subseteq t^{-n} (t^n)=R$ is always true. Moreover, every fractional ideal of $R$ is canonical and, thus, almost canonical. If $R$ is not a DVR, then $R:\m=\m:\m$ and 
\[
R:\m=\m:\m \Longleftrightarrow (\omega_R:\omega_R):\m=\m:\m \Longleftrightarrow \omega_R:\m\omega_R =\m:\m \Longleftrightarrow \m\omega_R = \omega_R:(\m:\m).
\]
Hence, $I$ is an almost canonical ideal of $R$ if and only if $\m\omega_R \subseteq zI$. 
\end{proof}

\begin{remark}\label{type}\rm
If $R$ is local, we always have that $zI \subseteq \omega_R$ and, then, $(zI:\m) \subseteq (\omega_R:\m)$. Consequently, the type of $I$ is equal to
\begin{align*}
t(I)&=\ell_R\left(\frac{I:\m}{I}\right)=\ell_R\left(\frac{zI:\m}{zI}\right)\leq \ell_R\left(\frac{\omega_R:\m}{zI}\right)= \\
&=  \ell_R\left(\frac{\omega_R}{zI}\right) + \ell_R\left(\frac{\omega_R:\m}{\omega_R}\right) =  \ell_R\left(\frac{\omega_R}{zI}\right) + \ell_R\left(\frac{R}{\m}\right)=\ell_R\left(\frac{\omega_R}{zI}\right)+1.
\end{align*}
We notice that all the lengths above are finite because all the modules are fractional ideals containing a regular element and $R$ is one-dimensional.
Moreover, the previous proposition implies that $t(I)=\ell_R(\omega_R/zI)+1$ if and only if $I$ is an almost canonical ideal. Also, when $I=R$ we recover the well-known fact that $t(R)=\ell_R(\omega_R/R)+1$ if and only if $R$ is almost Gorenstein, see \cite[Definition-Proposition 20]{BF} for the analytically unramified case.
\end{remark}

We denote the set of maximal ideals of $R$ by $\Max(R)$ and the Jacobson radical of $R$ by $J(R)$.

\begin{corollary} \label{Characterization almost canonical ideal}
Let $R$ be a semilocal ring, $I$ be a fractional ideal of $R$ and assume that it is possible to choose a reduction $z=1$ for the fractional ideal $(\omega_R:I)_\m$ of $R_\m$ for every $\m \in \Max(R)$. The following conditions are equivalent:
\begin{enumerate}
\item $I$ is an almost canonical ideal;
\item $(\omega_R:J(R))=(I:J(R))$;
\item $J(R) \omega_R \subseteq I$.
\end{enumerate}
\end{corollary}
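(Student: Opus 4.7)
The plan is to reduce the semilocal statement to the local case already handled by the preceding proposition, exploiting the uniform choice $z=1$ to express the local conditions in terms of a single global element.

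First I would recall the structural facts about a semilocal ring $R$ with $\Max(R)=\{\m_1,\dots,\m_n\}$. The Jacobson radical is $J(R)=\bigcap_{i=1}^{n}\m_i$, and localizing at $\m_j$ collapses the other maximal ideals into units, so $J(R)_{\m_j}=\m_j R_{\m_j}$ for each $j$. Moreover, since $J(R)$ and $I$ are finitely generated and the canonical module satisfies $(\omega_R)_\m=\omega_{R_\m}$ for every $\m\in\Max(R)$, localization commutes with the relevant operations: $(\omega_R:J(R))_\m=\omega_{R_\m}:\m R_\m$, $(I:J(R))_\m=I_\m:\m R_\m$, and $(J(R)\omega_R)_\m=\m R_\m\cdot\omega_{R_\m}$.

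Next I would use the definition of almost canonical ideal in the non-local setting: condition (1) says precisely that $I_\m$ is almost canonical in $R_\m$ for every $\m\in\Max(R)$. Applying the preceding proposition at each localization, with reduction equal to $1$ by hypothesis, this is equivalent both to $\m R_\m\cdot\omega_{R_\m}\subseteq I_\m$ for every $\m$ and to $\omega_{R_\m}:\m R_\m=I_\m:\m R_\m$ for every $\m$. Translating these pointwise conditions back to global ones through the identifications of the previous paragraph, and invoking the standard local-global principle that an inclusion or equality of fractional ideals can be detected after localization at every maximal ideal, one obtains $J(R)\omega_R\subseteq I$ and $\omega_R:J(R)=I:J(R)$, respectively. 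This delivers the two equivalences (1)$\Leftrightarrow$(3) and (1)$\Leftrightarrow$(2).

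I do not foresee any genuine obstacle: the argument is essentially a bookkeeping exercise in localization. The one point that deserves explicit verification is the compatibility of the hypothesis on $z$ with localization, but this is already built into the assumption that $z=1$ works as a reduction of $(\omega_R:I)_\m$ for every $\m$, so that $(zI)_\m=I_\m$ coincides with the fractional ideal appearing in the localized form of the preceding proposition. Once this is observed, the rest is formal.
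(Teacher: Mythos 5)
Your proof is correct and follows essentially the same route as the paper: localize at each maximal ideal, observe $J(R)_\m=\m R_\m$ and that colons and products of fractional ideals commute with localization, invoke the preceding local proposition with reduction $z=1$, and then glue back using the local-global principle for inclusions and equalities. The only added detail you make explicit, the identification $(\omega_R)_\m=\omega_{R_\m}$, is harmless and implicit in the paper's argument.
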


\begin{proof}
Since $R$ is semilocal, $J(R)_{\m}=J(R_{\m})=\m R_{\m}$ for every $\m \in \Max(R)$, since localization commutes with finite intersections. Given $\m \in \Max(R)$, it follows from the previous proposition and the hypothesis that $I_{\m}$ is almost canonical if and only if $(\omega_{R_{\m}}:\m R_{\m})=(I_{\m}:\m R_{\m})$, that is equivalent to $(\omega_{R}:J(R))_{\m}=(I:J(R))_\m$. Since the equality is a local property we get (1) $\Leftrightarrow$ (2). The equivalence (1) $\Leftrightarrow$ (3) is similar.
\end{proof}

\section{Generalized almost Gorenstein local rings}

Let $(R,\m,k)$ be a one-dimensional Cohen-Macaulay local ring with a canonical module $\omega_R$ such that $R \subseteq \omega_R \subseteq \overline{R}$. Following \cite[Lemma 3.1]{CGKM}, we say that $R$ is 2-AGL if $\omega_R^2=\omega_R^3$ and $\ell_R(\omega_R^2/\omega_R)=2$. 
We also notice that a ring is almost Gorenstein if and only if the length $\ell_R(\omega_R^2/\omega_R)$ is less than 2, being zero only in the Gorenstein case; see \cite[Theorem 3.16]{GMP}.

In this section we introduce a new class of rings which includes both almost Gorenstein and 2-AGL rings; moreover, this will be a generalization of the notion of GAS numerical semigroup given in \cite{DS}.
First of all we notice that in a GAS numerical semigroup $S$ we always have $2M \subseteq (S-K(S))$, see \cite[Proposition 3.5]{DS}, so we assume that $\m^2 \subseteq (R:\omega_R)$. Let $\{x_1, \dots, x_n\}$ be a minimal system of generators of $\m$. If $R$ is not Gorenstein, we have $(R:\omega_R) \subseteq \m$, then $n=\ell_R(\m/\m^2)\geq \ell_R(\m/(R:\omega_R))=\ell_k(\m/(R:\omega_R))=:r$ and it is easy to see that, up to reorder the generators, $R:\omega_R=(x_1, \dots, x_r)^2+ (x_{r+1},\dots, x_n)$.
Clearly $r=0$ if and only if $\m \subseteq (R:\omega_R)$, i.e. $R$ is almost Gorenstein (see the remark below). Moreover, in \cite[Proposition 3.3]{CGKM} it is proved that if $R$ is 2-AGL, then $r=1$.

\begin{remark} \rm \label{R:omega}
{\bf 1.} If $R$ is not Gorenstein, we claim that $R:\omega_R=\m:\omega_R$.
If $x \in (R:\omega_R)\setminus (\m :\omega_R)$, then $x\omega_R$ is an $R$-module contained in $R$ but not in $\m$ and, thus, $x \omega_R =R$. Moreover, $x \in R$ because $1 \in \omega_R$, while $1 \in x \omega_R$ implies that $x$ is a regular element for $\omega_R$. Hence, $R \cong \omega_R$ and $R$ is Gorenstein. \\
{\bf 2.} If $R$ is not Gorenstein, it is straightforward to see that $R:\omega_R$ is also an $(\m:\m)$-module. In fact if $a \in (\m:\m)$, $b \in (R:\omega_R)=(\m:\omega_R)$ and $c \in \omega_R$, it follows that $abc \in (\m:\m)\m \subseteq \m \subseteq R$. 
\end{remark}

The next lemma will give raise to the definition of gAGL ring. Set $B=\m:\m$ and recall that $J(B)$ denotes the Jacobson radical of $B$.

\begin{lemma} \label{Def gAGL}
Assume that $\m^2 \subseteq (R:\omega_R)$ and let $I$ be an ideal of $B$ contained in $J(B)$. The following conditions are equivalent:
\begin{enumerate}
\item For every minimal system of generators $\{x_1, \dots, x_n\}$ of $\m$ for which $R:\omega_R=(x_1, \dots, x_r)^2 + (x_{r+1}, \dots, x_n)$ there are no $x \in I$ such that $x_j=x x_i$ for some $i,j=1, \dots, r$;
\item $I \subseteq (R:\omega_R):\m$.
\end{enumerate}
\end{lemma}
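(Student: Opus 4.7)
The plan is to prove the two implications separately. The heart of the argument will be a replacement step that, given $x \in I$ with $x\m \not\subseteq R:\omega_R$, produces a minimal generating system of $\m$ with the required decomposition in which (1) visibly fails.

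For (2)$\Rightarrow$(1), suppose $I \subseteq (R:\omega_R):\m$, and assume for contradiction that some $x \in I$ satisfies $x_j = xx_i$ with $i,j \in \{1,\ldots,r\}$. Then $x_j = xx_i \in R:\omega_R$ by the definition of the colon ideal. On the other hand, the hypothesis $\m^2 \subseteq R:\omega_R$ makes $\m/(R:\omega_R)$ a $k$-vector space, whose dimension is $r$, and the decomposition of $R:\omega_R$ forces $\bar{x}_1,\ldots,\bar{x}_r$ to be a $k$-basis; hence $\bar{x}_j \neq 0$, a contradiction.

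For (1)$\Rightarrow$(2), I argue by contrapositive. Starting with any minimal generating system $\{x_1,\ldots,x_n\}$ satisfying the decomposition and an element $x \in I$ with $x\m \not\subseteq R:\omega_R$, I aim to build a new minimal generating system with the same decomposition in which (1) fails. Because $R:\omega_R$ is a $B$-module by Remark~\ref{R:omega} and $x \in I \subseteq B$, for every $l > r$ we have $xx_l \in R:\omega_R$, so there is some $i \in \{1,\ldots,r\}$ with $xx_i \notin R:\omega_R$. Writing $\overline{xx_i} = \sum_{l=1}^{r}\gamma_l \bar{x}_l$ in the $k$-basis $\bar{x}_1,\ldots,\bar{x}_r$ of $\m/(R:\omega_R)$, with not all $\gamma_l \in k$ zero, the key claim is that some $\gamma_j$ with $j \neq i$ is nonzero. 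Indeed, if $\gamma_l = 0$ for all $l \neq i$ and $\gamma_i \neq 0$, then lifting $\gamma_i$ to a unit $\tilde{\gamma}_i \in R$ gives $(x - \tilde{\gamma}_i)x_i = e$ for some $e \in R:\omega_R$; since $x \in J(B)$ and $\tilde{\gamma}_i$ is a unit in $B$, the element $x - \tilde{\gamma}_i$ is a unit in $B$, and the $B$-module structure of $R:\omega_R$ forces $x_i = (x-\tilde{\gamma}_i)^{-1}e \in R:\omega_R$, contradicting $i \leq r$.

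Fix such $j$ and set $z_j := xx_i$ and $z_l := x_l$ for $l \neq j$. Since $\gamma_j \neq 0$, the classes $\bar{z}_1,\ldots,\bar{z}_n$ still form a $k$-basis of $\m/\m^2$, so $\{z_1,\ldots,z_n\}$ is a minimal generating system of $\m$ by Nakayama. Moreover $z_{r+1},\ldots,z_n$ are unchanged and hence still lie in $R:\omega_R$, which combined with $\m^2 \subseteq R:\omega_R$ and the fact that $\bar{z}_1,\ldots,\bar{z}_r$ is a basis of $\m/(R:\omega_R)$ yields, by the same argument justifying the decomposition for the original system, that $R:\omega_R = (z_1,\ldots,z_r)^2 + (z_{r+1},\ldots,z_n)$. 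Finally, since $j \neq i$ one has $z_i = x_i$, so $z_j = xz_i$ with $i,j \in \{1,\ldots,r\}$ and $x \in I$, contradicting~(1). The main obstacle is precisely the key claim above: without $I \subseteq J(B)$, the element $x - \tilde{\gamma}_i$ need not be a unit in $B$, and the replacement procedure fails.
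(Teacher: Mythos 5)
Your proof is correct and takes essentially the same route as the paper: both directions rest on the same ingredients, with the hard direction built on the replacement step (swapping in $z_j = xx_i$ to produce a new minimal generating system still satisfying the decomposition) and the observation that $x-\tilde{\gamma}_i$ is a unit of $B$ because $I \subseteq J(B)$ while $R:\omega_R$ is a $B$-module. The only difference is presentational: you argue by contrapositive and rule out the case where only $\gamma_i$ is nonzero before performing the replacement, whereas the paper argues directly and treats the two cases $\lambda_j \neq 0$ for some $j \neq i$ versus $\lambda_j = 0$ for all $j \neq i$ in parallel.
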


\begin{proof} 
(1) $\Rightarrow$ (2) We can assume that $R$ is not Gorenstein, otherwise $(R:\omega_R):\m=R:\m \supseteq B \supseteq I$.
Let $\{x_1, \dots, x_n\}$ be a minimal system of generators of $\m$ and assume that $R:\omega_R=(x_1, \dots, x_r)^2+(x_{r+1}, \dots, x_n)$ with $r \geq 0$. Given $x \in I$, we need to prove that $x x_i \in (R:\omega_R)$ for every $i=1, \dots, n$. If $i>r$, then $x x_i$ is in $R:\omega_R$ because this is a $B$-module. Let $1 \leq i \leq r$ and assume by contradiction that $xx_i \in \m \setminus (R:\omega_R)$. Thus, $xx_i=\sum_{j=1}^r \lambda_j x_j + \sum_{j=r+1}^n \mu_j x_j + y$ where $\lambda_j$ and $\mu_j$ are zero of units of $R$ and $y\in \m^2$. Since $xx_i \notin (R:\omega_R)$, there is at least one $\lambda_j$ different from zero. If $\lambda_j \neq 0$ for some $j \neq i$, then we set $\overline{x_j}=xx_i$. The images of the elements in $X=\{x_1, \dots, x_{j-1},\overline{x_j},x_{j+1}, \dots, x_n\}$ are a basis of the $R/\m$-vector space $\m/\m^2$ and, then, $X$ is a minimal system of generators of $\m$. This is a contradiction because $\overline{x_j}=xx_i$ and $x \in I$. On the other hand, if $\lambda_j=0$ for all $j \neq i$, then $\lambda_i \neq 0$ and $xx_i=\lambda_i x_i+\sum_{j={r+1}}^n \mu_j x_j+y=\lambda_i x_i + \delta$ with $\delta \in (R:\omega_R)$. Since $x \in I \subseteq J(B)$ and $\lambda_i$ is a unit of $R$, it follows that $x-\lambda_i$ is a unit of $B$ and, thus, $x_i=\delta (x-\lambda_i)^{-1} \in (R:\omega_R)$ because $R:\omega_R$ is a $B$-module. This yields a contradiction and, then, we get $I \subseteq (R:\omega_R):\m$. \\
(2) $\Rightarrow$ (1) If $x_j=x x_i$ with $x \in I\subseteq (R:\omega_R):\m$ for some $1 \leq i, j \leq r$, then $x_j=x x_i \in (R:\omega_R)$ yields a contradiction.
\end{proof}

Even though the very definition of GAS numerical semigroup is related to the pseudo-Frobenius numbers of the semigroup, that are in turn connected to the generators of the socle of the associated ring, in \cite[Proposition 3.5]{DS} it is proved that a numerical semigroup $S$ is GAS if and only if $x-y$ is not in the maximal ideal of $M-M$ for every $x,y \in M \setminus (S-K(S))$. Moreover, excluding the Gorenstein case, it is showed that this is also equivalent to say that $2M \subseteq S-K(S) \subseteq M$ and the maximal ideal of $M-M$ is contained in $(S-K(S))-M$. Therefore, also in light of the previous lemma, the following definition arises naturally.

\begin{definition}
Let $(R,\m)$ be a one-dimensional Cohen-Macaulay local ring with a canonical module $\omega_R$ such that $R \subseteq \omega_R \subseteq \overline{R}$. We say that $R$ is a generalized almost Gorenstein local ring, briefly {\rm gAGL}, if $J(B) \subseteq (R:\omega_R):\m$, i.e. $\m \omega_R J(B) \subseteq R$.
\end{definition}

A priori the definition above depends on $\omega_R$, but we will show later that it is independent of the chosen canonical module, see Remark \ref{independence}. Therefore, it is straightforward to see that $k[[S]]$ is gAGL if and only if $S$ is GAS: simply choose $\omega_{k[[S]]}=(t^s \mid s \in K(S))$.

\begin{remark} \rm \label{Almost Gorenstein are gAGL}
{\bf 1.} The containment $\m:\m \subsetneq (R:\omega_R):\m$ holds if and only if $R$ is a DVR. In fact, if it is strict, then $\m \subsetneq (R:\omega_R) \subseteq R$ because $1 \in \omega_R$ and, thus, $R:\omega_R=R$. Moreover, $\m:\m \subsetneq R:\m$ if and only if $R$ is a DVR. The converse can be proved in the same way, since $R=\omega_R$. Hence, a DVR is gAGL. \\
{\bf 2.} If $R$ is not a DVR, then $(R:\omega_R):\m=\m:\m$ if and only if $R$ is almost Gorenstein. Indeed if $R$ is Gorenstein, then $(R:\omega_R):\m=R:\m=\m:\m$. If $R$ is not Gorenstein, then $1 \notin (R:\omega_R)$ and, so, $(R:\omega_R) \subseteq \m$ because $1 \in \omega_R$. This means that $(R:\omega_R):\m \subseteq \m:\m$ and, since $(R:\omega_R):\m$ is a $(\m:\m)$-module, the equality holds if and only if $1 \in (R:\omega_R):\m=(R:\m):\omega_R=(\m:\m):\omega_R$ or equivalently $\omega_R \subseteq \m:\m$, i.e. $R$ is almost Gorenstein. In particular, an almost Gorenstein ring is gAGL. \\
{\bf 3.} If $R$ is 2-AGL, then $R:\omega_R=(x_1)^2+(x_2, \dots, x_n)$ for every minimal system of generators by \cite[Proposition 3.3]{CGKM} and, therefore, $R$ is gAGL by Lemma \ref{Def gAGL}. \\
{\bf 4.} Let $k$ be a field and let $t$ be an indeterminate. The ring $R=k[[t^6,t^7,t^9,t^{17}]]$ is neither almost Gorenstein nor 2-AGL, but it is gAGL because the associated numerical semigroup is GAS. Indeed $\omega_R$ is generated by $1$, $t$, $t^3$ and $R:\omega_R=(t^7,t^9)^2 + (t^6,t^{17})$.
\end{remark} 

We are going to show that if $R$ has minimal multiplicity and $(B,\n)$ is local, then $R$ is 2-AGL if and only if it is gAGL, but not almost Gorenstein, and $R/\m \cong B/\n$. 
We first recall that $\alpha \m=\m^2$ if and only if $B=\alpha^{-1}\m$, where $\alpha \in \m$. It is well-known that this condition is equivalent to say that $R$ has minimal multiplicity, i.e. $\nu(R)=e(R)$, where $\nu(R)$ denotes the embedding dimension of $R$ and $e(R)$ its multiplicity. We start with a lemma.

\begin{lemma} \label{R:omega^2} Let $\alpha \in \m$. The following properties hold:
\begin{enumerate}
\item If $R$ is {\rm gAGL}, $(B,\n)$ is local and $R/\m \cong B/\n$, then $\m^2 \subseteq R:\omega_R^2$;
\item If $R$ is not Gorenstein and $\alpha \m =\m^2$, then $(R:\omega_R):\m=\{m/\alpha \mid m \in (R:\omega_R) \}$;
\item If $\alpha \m=\m^2$ and $R$ is not almost Gorenstein, then $\alpha \notin (R:\omega_R)$;
\item There exists a minimal system of generators of $\m$ whose elements are regular.
\end{enumerate}
\end{lemma}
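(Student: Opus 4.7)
The plan is to handle parts (2), (3), (4) first by standard commutative algebra, and to reserve the main work for part (1).

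For (4), I would apply the coset form of prime avoidance. Since $R$ is one-dimensional Cohen--Macaulay, its associated primes coincide with its minimal primes $\p_1,\ldots,\p_s$, and $\m\not\subseteq\bigcup\p_j$ (else $\dim R=0$). Given a minimal generating set $\{x_1,\ldots,x_n\}$ of $\m$, if some coset $x_i+\m^2$ lay inside $\bigcup\p_j$, coset prime avoidance would force $x_i+\m^2\subseteq\p_j$, giving $\m^2\subseteq\p_j$ and so $\m=\p_j$, a contradiction; thus each $x_i$ can be perturbed by an element of $\m^2$ to a regular $y_i$, and the $y_i$ still minimally generate $\m$. For (2), using $\m=\alpha B$ and the fact (Remark~\ref{R:omega}) that $R:\omega_R$ is a $B$-module,
\[
(R:\omega_R):\m=(R:\omega_R):(\alpha B)=\alpha^{-1}\bigl((R:\omega_R):B\bigr)=\alpha^{-1}(R:\omega_R),
\]
which is the claimed description; here $\alpha$ is regular, because $\alpha\m=\m^2$ prevents $\alpha$ from lying in any minimal prime of $R$. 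For (3), assume for a contradiction that $\alpha\omega_R\subseteq R$. Since $\alpha\in\m$ belongs to every maximal ideal of $\overline{R}$, it is a non-unit in $\overline{R}$, so $\alpha^{-1}\notin\overline{R}\supseteq\omega_R$ and hence $\alpha\omega_R\neq R$, forcing $\alpha\omega_R\subseteq\m$. Then $\m\omega_R=(\alpha B)\omega_R=(\alpha\omega_R)B\subseteq\m B\subseteq\m$, so $\omega_R\subseteq\m:\m$ and $R$ is almost Gorenstein, contrary to hypothesis.

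For (1), the DVR case is trivial, so assume $R$ is not a DVR and hence $R:\m=\m:\m=B$. The gAGL hypothesis $\m\omega_R\cdot\n\subseteq R$ yields on one hand $\omega_R\n\subseteq R:\m=B$, and on the other (using $\m\subseteq\n$) $\m\omega_R\cdot\m\subseteq R$, i.e.\ $\m\omega_R\subseteq R:\m=B$. The set $\m\omega_R$ is an ideal of the local ring $B$ (since $B\m=\m$), and it must be proper: otherwise $1\in\m\omega_R\subseteq\m\cdot R[\omega_R]$, impossible because $R[\omega_R]$ is a finite semilocal extension of $R$ every maximal ideal of which contracts to $\m$. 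Hence $\m\omega_R\subseteq\n$, and therefore
\[
\m^2\omega_R^2=(\m\omega_R)(\m\omega_R)\subseteq\n\cdot\m\omega_R=\m\cdot(\omega_R\n)\subseteq\m B=\m,
\]
which gives the desired $\m^2\subseteq R:\omega_R^2$ (indeed with the stronger conclusion $\m^2\omega_R^2\subseteq\m$).

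The delicate step is securing the proper inclusion $\m\omega_R\subsetneq B$; the cleanest route passes through $R[\omega_R]$. Because $\omega_R$ is a finitely generated $R$-module of integral elements (it lies in $\overline{R}$), $R[\omega_R]$ is a finite, semilocal, one-dimensional $R$-algebra whose maximal ideals all lie above $\m$, so $\m\cdot R[\omega_R]$ is a proper ideal and $1\notin\m\omega_R$. Once this is in hand, part~(1) reduces to the short chain of rewrites above, and no use of the hypothesis $R/\m\cong B/\n$ is actually needed for this part.
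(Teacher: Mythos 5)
Your proof is correct, and for part~(1) it takes a genuinely different (and in fact stronger) route than the paper. The paper deduces $\m\omega_R\subseteq\n$ by first using the hypothesis $R/\m\cong B/\n$ to rule out $\m=\n$ when $R$ is not a DVR, and then arguing that $R:\n$ contains no units of $B$; you instead observe directly that $\m\omega_R$ is a $B$-submodule of $B$ not containing $1$ (because $\m\omega_R\subseteq\m\,R[\omega_R]$ and $\m\,R[\omega_R]$ lies in the Jacobson radical of the finite extension $R[\omega_R]$), so it is a proper ideal of the local ring $B$ and hence contained in $\n$. This bypasses the residue-field hypothesis entirely, so your argument actually proves the slightly stronger statement: if $R$ is gAGL and $B$ is local, then $\m^2\subseteq R:\omega_R^2$, with no condition on $B/\n$. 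The rest is essentially the paper's chain $(\m\omega_R)(\m\omega_R)\subseteq\n\m\omega_R\subseteq\m B=\m$, packaged a bit more compactly. For parts~(2) and (3) you replace the paper's element-by-element verifications by the identity $(R:\omega_R):\alpha B=\alpha^{-1}\bigl((R:\omega_R):B\bigr)=\alpha^{-1}(R:\omega_R)$ (valid because $R:\omega_R$ is a $B$-module and $\alpha$ is regular) and by the observation that $\alpha\omega_R$ is a proper ideal of $R$ since $\alpha^{-1}\notin\overline R$; both are correct and marginally slicker. For part~(4) you invoke Davis's coset form of prime avoidance applied to $x_i+\m^2$ rather than the paper's explicit replacement $x\mapsto x+yz$; this is also correct, and shorter, at the cost of citing a slightly less elementary avoidance lemma.
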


\begin{proof}
(1) We can assume that $R$ is not a DVR otherwise $R:\omega_R^2=R$. If $\m = \n$ it follows
\[
1=\ell_{B/\n}(B/\n)=\ell_{R/\m}(B/\n)=\ell_{R/\m}(B/\m)=\ell_{R/\m}(B/R)+\ell_{R/\m}(R/\m) = \ell_{R/\m}(B/R)+ 1
\]
and then $\ell_{R/\m}(B/R)=0$, i.e. $B=R$, that is equivalent to say that $R$ is a DVR. Thus, assume $\m \neq \n$.
Since $R$ is gAGL, $\n \m \subseteq (R:\omega_R)$ or equivalently $\m \omega_R \subseteq (R:\n)$. Moreover, $R:\n \subseteq R:\m=\m:\m$ and in $R:\n$ there are no units $\lambda$ of $B$ otherwise $\lambda \n \subseteq R$ implies $\n=\lambda \n \subseteq \m$. Hence, we get $\m \omega_R \subseteq \n \subseteq (R:\omega_R):\m$ that implies $\m^2 \omega_R \subseteq R:\omega_R$ and, then, $\m^2 \subseteq R:\omega_R^2$.\\
(2) Let $x$ be an element of $(R:\omega_R):\m$. Since $(R:\omega_R) \subseteq \m$, then $x \in (\m:\m)=\alpha^{-1}\m$ and, so, $x= \alpha^{-1}m$ for some $m \in \m$. Moreover, $m =x \alpha \in (R:\omega_R)$. Conversely, if $m \in (R:\omega_R)=(\m:\omega_R)$ and $\lambda \in \omega_R$, then $\alpha^{-1}m \lambda \in \alpha^{-1}\m=\m:\m=R:\m$ and $\alpha^{-1}m \in (R:\m):\omega_R=(R:\omega_R):\m$. \\
(3) If $\alpha \in (R: \omega_R)=\m : \omega_R$, then $\alpha \omega_R \subseteq \m$ implies that $\omega_R \subseteq \alpha^{-1} \m=\m:\m$. Therefore, $\omega_R \m \subseteq \m$ and, then, $R$ is almost Gorenstein. \\
(4) Let $\p_1, \dots, \p_s$ be the associated prime ideals of $R$ and suppose that there is an element $x$ in a minimal system of generators of $\m$ that is not regular. We assume that $x \in (\p_1 \cap \dots \cap \p_t)$ and $x \notin (\p_{t+1} \cup \dots \cup \p_s)$ with $t \geq 1$. If $t=s$, we set $y=1$, otherwise we choose $y \in (\p_{t+1} \cap \dots \cap \p_s)\setminus (\p_1 \cup \dots \cup \p_t)$ that exists by the prime avoidance lemma and because $R$ is Cohen-Macaulay. Pick also $z \in \m^2 \setminus (\p_1 \cup \dots \cup \p_s)$. Then, $x+yz$ is a non-zero divisor because $x\in (\p_{1} \cap\dots \cap \p_{t})\setminus (\p_{t+1} \cup \dots \cup \p_s)$ and $yz \in (\p_{t+1} \cap\dots \cap \p_{s})\setminus (\p_{1} \cup \dots \cup \p_t)$. Moreover, $x$ and $x+yz$ have the same image in $\m/\m^2$ and, thus, it is possible to replace $x$ by $x+yz$.
\end{proof}

\begin{proposition} \label{2-AGL and gAGL}
Assume that $(B,\n)$ is local and $\alpha \m=\m^2$ for some $\alpha \in \m$. Then, $R$ is {\rm 2-AGL} if and only if $R/\m \cong B/\n$ and $R$ is {\rm gAGL} but not almost Gorenstein.
\end{proposition}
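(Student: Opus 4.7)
For the forward implication, if $R$ is $2$-AGL then $R$ is gAGL by part~3 of Remark~\ref{Almost Gorenstein are gAGL}; $R$ is not almost Gorenstein because $\ell_R(\omega_R^{2}/\omega_R)=2$ contradicts the almost Gorenstein criterion $\ell_R(\omega_R^{2}/\omega_R)\le 1$ recalled at the start of the section; and the residue-field isomorphism $R/\m\cong B/\n$ is part of what $2$-AGL implies in the present setting (minimal multiplicity, $B$ local) by \cite[Corollary 5.3]{CGKM}.

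For the converse direction, assume $R/\m\cong B/\n$, $R$ is gAGL, and $R$ is not almost Gorenstein. My plan is to establish the explicit form $R:\omega_R=(\alpha^{2})+(x_{2},\dots,x_{n})$ for a well-chosen minimal generating system of $\m$, and then convert this structural description into the $2$-AGL length conditions. By part~4 of Lemma~\ref{R:omega^2} I can take the $x_i$ regular and $x_{1}=\alpha$; the hypothesis $R/\m\cong B/\n$ combined with $\m=\alpha B$ forces $B=R+\n$ and $\m=\alpha R+\alpha\n$, so lifting a basis of the $R/\m$-vector space $\n/\m$ to $y_{2},\dots,y_{n}\in\n$ I may take $x_{i}=\alpha y_{i}$. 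The gAGL condition, reformulated via part~2 of Lemma~\ref{R:omega^2} as $\alpha\n\subseteq R:\omega_R$, together with the direct computation $\alpha\n=\m^{2}+(x_{2},\dots,x_{n})=(\alpha^{2},x_{2},\dots,x_{n})$, gives one containment; the other follows from writing $y\in R:\omega_R\subseteq\m$ (using $R$ not Gorenstein) as $y=r\alpha+\sum_{i\ge 2}r_{i}x_{i}$, observing $r\alpha\in R:\omega_R$, and using part~3 of Lemma~\ref{R:omega^2} (enforced by $R$ not almost Gorenstein) to exclude $\alpha$ from $R:\omega_R$, forcing $r\in\m$ and hence $r\alpha\in\m^{2}\subseteq(\alpha^{2},x_{2},\dots,x_{n})$.

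The main obstacle is the last step, upgrading the explicit form of $R:\omega_R$ to the defining length conditions $\omega_R^{2}=\omega_R^{3}$ and $\ell_R(\omega_R^{2}/\omega_R)=2$. Here I would use part~1 of Lemma~\ref{R:omega^2}, whose hypotheses are now in force, to obtain $\m^{2}\subseteq R:\omega_R^{2}$ and thus $\omega_R^{2}\subseteq R:\m^{2}=\alpha^{-1}B$ (using $\m^{2}=\alpha\m$ and $R:\m=B$); a length count inside $\alpha^{-1}B/\omega_R$ exploiting $B=R+\n$ and $\ell_R(B/R)=\nu(R)-1$ should bound $\ell_R(\omega_R^{2}/\omega_R)\le 2$, while $R$ not almost Gorenstein supplies the matching lower bound, and the stability $\omega_R^{3}=\omega_R^{2}$ then follows by multiplying $\omega_R^{2}\subseteq\alpha^{-1}B$ by $\omega_R$ and reapplying Lemma~\ref{R:omega^2}.1. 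A cleaner alternative route is to verify that $B$ is almost Gorenstein directly, with non-Gorensteinness of $B$ coming from $R$ not almost Gorenstein via \cite[Theorem 5.1]{GMP}, and then apply \cite[Corollary 5.3]{CGKM} in the reverse direction.
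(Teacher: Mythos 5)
Your forward direction and the first half of your converse are sound. In particular, your construction of the minimal generating system $\alpha=x_{1},\,x_{i}=\alpha y_{i}$ with the $y_{i}$ lifting a basis of $\n/\m$, and your deduction $R:\omega_R=(\alpha^{2},x_{2},\dots,x_{n})=\alpha\n$ (one inclusion from gAGL via Lemma~\ref{R:omega^2}(2), the other from writing $y\in R:\omega_R\subseteq\m$ and using Lemma~\ref{R:omega^2}(3)), is correct and is in fact a slightly different and rather clean route: the paper instead starts from the general shape $(x_{1},\dots,x_{r})^{2}+(x_{r+1},\dots,x_{n})$ and shows $r=1$ by a linear-independence argument in $\m/\m^{2}$ that uses $B/\n\cong R/\m$ to rewrite an offending generator $\alpha^{-1}x_{2}$ as a unit plus an element of $\n$. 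You also do not actually need the two-sided bound you describe for the length: once $R:\omega_R=\alpha\n$, duality with $\omega_R$ gives $\ell_R(\omega_R^{2}/\omega_R)=\ell_R(R/(R:\omega_R))=\ell_R(R/\alpha\n)=\ell_R(R/\m)+\ell_R(\alpha B/\alpha\n)=1+\ell_R(B/\n)=2$ outright.

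The genuine gap is in the stability $\omega_R^{2}=\omega_R^{3}$. ``Multiplying $\omega_R^{2}\subseteq\alpha^{-1}B$ by $\omega_R$ and reapplying Lemma~\ref{R:omega^2}(1)'' does not close the argument: it gives $\omega_R^{3}\subseteq\alpha^{-1}B\omega_R$, and $B\omega_R$ is strictly larger than $\omega_R$ whenever $R$ is not a DVR (since $\omega_R:\omega_R=R$, not $B$), so nothing forces $\omega_R^{3}$ back into $\omega_R^{2}=\alpha^{-1}(\omega_R:\n)$; nor does Lemma~\ref{R:omega^2}(1) say anything about $R:\omega_R^{3}$. This is precisely where the paper's real work lies: assuming $R:\omega_R^{2}\subsetneq R:\omega_R$, it picks a \emph{regular} minimal generator $x\in(R:\omega_R)\setminus(R:\omega_R^{2})$ (Lemma~\ref{R:omega^2}(4) plus $\m^{2}\subseteq R:\omega_R^{2}$), uses gAGL and $(R:\omega_R):\m=\n$ to show $\alpha x^{-1}\in\overline{R}$ and $\alpha^{-1}x\in B\subseteq\overline{R}$, concludes via \cite[Corollary 1.2.5]{SH} that $(x)$ is a reduction of $\m$ and hence $x\m=\m^{2}$ by \cite[Corollary 2.2]{M}, and then contradicts Lemma~\ref{R:omega^2}(3). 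None of this is visible in your sketch. Your ``cleaner alternative'' (show $B$ is almost Gorenstein and invoke \cite[Corollary 5.3]{CGKM}) is logically consistent but is not available at this point of the paper: showing $B$ almost Gorenstein from $R$ gAGL with minimal multiplicity is exactly Corollary~\ref{final corollary}, which depends on Theorem~\ref{main} in the next section, and the paper in fact presents Proposition~\ref{2-AGL and gAGL} together with Corollary~\ref{final corollary} as a \emph{reproof} of \cite[Corollary 5.3]{CGKM}, so relying on the latter would invert the intended logical flow.
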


\begin{proof}
If $R$ is 2-AGL, then $R/\m \cong B/\n$ by \cite[Theorem 5.2 (2)]{CGKM} and $R$ is gAGL by Remark \ref{Almost Gorenstein are gAGL}.

Assume now that $R/\m \cong B/\n$ and $R$ is gAGL but not almost Gorenstein. By Lemma \ref{R:omega^2}(3) it follows that $\alpha \notin (R: \omega_R)$. Clearly $\alpha \notin \m^2$, then we can choose a minimal system of generators of $\m$ containing $\alpha$, say $\alpha=x_1, x_2, \dots, x_n$. Since $R$ is gAGL and $\alpha \notin (R: \omega_R)$, we can assume that $R: \omega_R=(\alpha, x_2, \dots, x_r)^2+(x_{r+1}, \dots, x_n)$ for some $r \geq 1$. If $r \neq 1$, then $\alpha^{-1} x_2 \in \alpha^{-1}\m\setminus \n=B \setminus \n$ by Lemma \ref{Def gAGL}. Since $B/\n \cong R/\m$, there exists $\lambda \in R \setminus \m$ such that $\alpha^{-1} x_2 - \lambda \in \n \subseteq (R:\omega_R):\m$, that implies $x_2 - \lambda \alpha \in (R: \omega_R)$. Therefore, if we consider the images in $\m/\m^2$, we get $\overline{x_2} - \overline{\lambda} \overline{\alpha}= \sum_{i={r+1}}^n \overline{\lambda_i} \overline{x_i}$ with $\overline{\lambda}, \overline{\lambda_{r+1}}, \dots, \overline{\lambda_{n}} \in R/\m$, but this is a contradiction because $\alpha, x_2, x_{r+1}, \dots, x_n$ are linearly independent over $R/\m$. Hence, we get $r=1$. 

By \cite[Proposition 2.3(2) and Theorem 3.7(7)]{CGKM}, it is enough to prove that $R[\omega_R]=\omega_R^2$. This holds if and only if $\omega_R^2=\omega_R^3$, that is in turn equivalent to $\omega_R:\omega_R^2=\omega_R:\omega_R^3$, i.e. $R:\omega_R=R:\omega_R^2$. One inclusion is always true, so assume by contradiction that $(R:\omega_R^2) \subsetneq (R:\omega_R)$. 
By Lemma \ref{R:omega^2}(4) we can choose a minimal system of generators $X$ of $\m$ whose elements are regular and, since $\m^2 \subseteq (R:\omega_R^2)$ by Lemma \ref{R:omega^2}(1), there exists $x \in X$ such that $x \in (R:\omega_R) \setminus (R:\omega_R^2)$. Thus, there are $w_1$, $w_2 \in \omega_R$ such that $xw_1 w_2 \notin R$. In particular, $xw_1 \notin (R:\omega_R)$ and, since $x\in (\m:\omega_R)$ by Remark \ref{R:omega}.1, $xw_1 \in \m$. It follows that $xw_1=\lambda \alpha+y$, where $\lambda$ is a unit of $R$ and $y\in (R:\omega_R)$. In $Q(R)$ we have the equality $\alpha^{-1}xw_1=\lambda + \alpha^{-1}y$ and, since $R$ is gAGL, $\alpha^{-1}y \in (R:\omega_R):\m = \n$ by Lemma \ref{R:omega^2}(2) and Remark \ref{Almost Gorenstein are gAGL}.2. 
Therefore, $\lambda + \alpha^{-1}y$ is a unit of $B$ and of $\overline{R}$. Hence, $\alpha x^{-1}=w_1(\lambda + \alpha^{-1}y)^{-1} \in \overline{R}$.
Since $\alpha^{-1} x\in \alpha^{-1}\m=B \subseteq \overline{R}$, it follows immediately that $(x)\overline{R}=(\alpha) \overline{R}$. Also, since $(\alpha)$ is a reduction of $\m$, we get $(x) \subseteq \m \subseteq \overline{(\alpha)} =(\alpha) \overline{R} \cap R= (x) \overline{R} \cap R= \overline{(x)}$ and this implies that $(x)$ is a reduction of $\m$, see \cite[Corollary 1.2.5]{SH}. Moreover, we have that $x\m=\m^2$ by \cite[Corollary 2.2]{M}. Hence, $x \notin (R:\omega_R)$ by Lemma \ref{R:omega^2}(3) and this yields a contradiction.
\end{proof} 

\begin{example} \label{Example 2} \rm
Let $k$ be a field and let $R=k+(t^3k[[t^3,t^4,t^5]]\times u^3k[[u^3,u^4,u^5]])\cong k[[x_1,\dots x_6]]/I$, where $I=(x_1^2x_3-x_2^2, x_1^5-x_2x_3, x_3^2-x_1^3x_2,x_4,x_5,x_6)\cap (x_1,x_2,x_3,x_4^2x_6-x_5^2, x_4^5-x_5x_6, x_6^2-x_4^3x_5)$. The ring $R$ is an algebroid curve with two branches, so it is reduced and analytically unramified. To this kind of rings it is possible to associate a value semigroup $v(R)$ contained in $\mathbb{N}^2$, generalizing classical facts about the one branch case, and many properties 
at ring level can be translated and studied at semigroup level.
For details about this technique we refer to \cite{BDF} and \cite{D}.
In this case the value semigroup of $R$ is depicted with dots in Picture 1.
A reduction of the maximal ideal is given by any element of minimal value in $\mathfrak m$, e.g. $(t^3,u^3)$. If we compute $\mathfrak m(t^3,u^3)^{-1}$, we get a ring and, therefore, it coincides with $B=\mathfrak m: \mathfrak m$ which means that $R$ has minimal multiplicity. We depicted the value semigroup of $B$ in Picture 2 and we notice that $B$ is not local.

A canonical ideal $R \subseteq \omega_R \subseteq \overline R$ is characterized by its value set $K$, which can be described numerically (\cite[Theorem 4.1]{D}). We have that $v(R) \subseteq K
\subseteq \mathbb N^2$ and we depicted the points of $K\setminus v(R)$
 with circles in Picture 1; more precisely, in this case, $\omega_R$ can be chosen as the $k$-vector space generated by $(t^n,u^m)$ with $(n,m)\in K$. 
 
It is possible to check that $v(J(B))=\{(m,n) \mid m\geq 3, n\geq 3 \}$ and that $v(\m \omega_R J(B))=v(\mathfrak m)+K+v(J(B))$. Therefore, it contains only points $(m,n)$ with $m\geq 6$ and $n \geq 6$, that means  $\m \omega_R J(B)\subseteq 
(R:\overline R) \subset R$. It follows that $R$ is gAGL. 
We notice that $\omega_R^2=\omega_R^3= \mathbb N^2$ and using value sets it is possible to compute lengths (see \cite[Section 2]{D} and \cite[Section 2.1]{BDF}): we get that $\ell _R(\omega_R^2/\omega_R)=3$ and so $R$ is not 2-AGL. Moreover, $B$ has two maximal ideals and both its localizations are almost Gorenstein and not Gorenstein; indeed they are isomorphic to $k[[u^3,u^4,u^5]]$ and $k[[t^3,t^4,t^5]]$ respectively. Therefore, $B$ is almost Gorenstein being almost Gorenstein both its localizations. 

In \cite[Corollary 5.3]{CGKM} it is proved that, if $R$ has minimal multiplicity and $(B,\n)$ is local, $R$ is 2-AGL if and only if $B$ is almost Gorenstein and $R/\m \cong B/\n$.
This example shows that the notion of 2-AGL ring is not enough to characterize the almost Gorensteinnes of $B$, even if $R$ has minimal multiplicity and all the residue fields of $B$ are isomorphic to $R/\m$.
In Corollary \ref{final corollary} we are going to show that the notion of gAGL ring is a step forward in this direction. 

\begin{picture}(320,200)(-40,-60)
\put(0,0){\line(1,0){130}}
\put(0,0){\line(0,1){130}}
\put(0,0){\circle*{3}} \put(30,30){\circle*{3}} 
\multiput(60,30)(10,0){4}{\circle*{3}}
\multiput(60,60)(10,0){4}{\circle*{3}}
\multiput(60,70)(10,0){4}{\circle*{3}}
\multiput(60,80)(10,0){4}{\circle*{3}}
\multiput(60,90)(10,0){4}{\circle*{3}}
\multiput(30,60)(0,10){4}{\circle*{3}}
\multiput(30,0)(10,0){7}{\circle{3}}
\multiput(30,10)(10,0){7}{\circle{3}}
\multiput(30,40)(10,0){7}{\circle{3}}
\multiput(0,30)(0,10){7}{\circle{3}}
\multiput(10,30)(0,10){7}{\circle{3}}
\multiput(40,50)(0,10){5}{\circle{3}}
\put(10,0){\circle{3}} \put(0,10){\circle{3}} \put(30,50){\circle{3}} \put(10,10){\circle{3}}
\put(40,30){\circle{3}} \put(50,30){\circle{3}} \put(50,50){\circle{3}}
\multiput(100,30)(5,0){4}{\circle*{1.5}}
\multiput(100,0)(5,0){4}{\circle{1.5}}
\multiput(100,10)(5,0){4}{\circle{1.5}}
\multiput(100,40)(5,0){4}{\circle{1.5}}
\multiput(100,60)(5,0){4}{\circle*{1.5}}
\multiput(100,70)(5,0){4}{\circle*{1.5}}
\multiput(100,80)(5,0){4}{\circle*{1.5}}
\multiput(100,90)(5,0){4}{\circle*{1.5}}
\multiput(100,100)(5,0){4}{\circle*{1.5}}
\multiput(100,105)(5,0){4}{\circle*{1.5}}
\multiput(100,110)(5,0){4}{\circle*{1.5}}
\multiput(100,115)(5,0){4}{\circle*{1.5}}
\multiput(30,100)(0,5){4}{\circle*{1.5}}
\multiput(0,100)(0,5){4}{\circle{1.5}}
\multiput(10,100)(0,5){4}{\circle{1.5}}
\multiput(40,100)(0,5){4}{\circle{1.5}}
\multiput(60,100)(0,5){4}{\circle*{1.5}}
\multiput(70,100)(0,5){4}{\circle*{1.5}}
\multiput(80,100)(0,5){4}{\circle*{1.5}}
\multiput(90,100)(0,5){4}{\circle*{1.5}}
\put(27,-15){\small 3}
\put(57,-15){\small 6}
\put(67,-15){\small 7}
\put(78,-15){$\rightarrow$}
\put(-15,27){\small 3}
\put(-15,57){\small 6}
\put(-15,67){\small 7}
\put(-15,80){$\uparrow$}
\put(-10,-40){Picture 1. $v(R)$ and $v(\omega_R)$}

\put(260,0){\line(1,0){100}}
\put(260,0){\line(0,1){100}}
\put(260,0){\circle*{3}}
\multiput(290,0)(10,0){4}{\circle*{3}}
\multiput(260,30)(0,10){4}{\circle*{3}}
\multiput(290,30)(0,10){4}{\circle*{3}}
\multiput(300,30)(0,10){4}{\circle*{3}}
\multiput(310,30)(0,10){4}{\circle*{3}}
\multiput(320,30)(0,10){4}{\circle*{3}}
\multiput(330,0)(5,0){4}{\circle*{1.5}}
\multiput(330,30)(5,0){4}{\circle*{1.5}}
\multiput(330,40)(5,0){4}{\circle*{1.5}}
\multiput(330,50)(5,0){4}{\circle*{1.5}}
\multiput(330,60)(5,0){4}{\circle*{1.5}}
\multiput(330,70)(5,0){4}{\circle*{1.5}}
\multiput(330,75)(5,0){4}{\circle*{1.5}}
\multiput(330,80)(5,0){4}{\circle*{1.5}}
\multiput(330,85)(5,0){4}{\circle*{1.5}}
\multiput(260,70)(0,5){4}{\circle*{1.5}}
\multiput(290,70)(0,5){4}{\circle*{1.5}}
\multiput(300,70)(0,5){4}{\circle*{1.5}}
\multiput(310,70)(0,5){4}{\circle*{1.5}}
\multiput(320,70)(0,5){4}{\circle*{1.5}}
\put(287,-15){\small 3}
\put(297,-15){\small 4}
\put(307,-15){\small 5}
\put(318,-15){\small 6}
\put(328,-15){$\rightarrow$}
\put(245,27){\small 3}
\put(245,37){\small 4}
\put(245,47){\small 5}
\put(245,57){\small 6}
\put(245,70){$\uparrow$}
\put(260,-40){Picture~2. $v(B)$}
\end{picture}

\end{example}

Despite the numerical semigroup case, there exist gAGL rings that are not 2-AGL even if $\m^2=\alpha \m$ and $B$ is local. We show such a ring in the following example.

\begin{example} \label{Example 1} \rm
Consider the generalized semigroup ring $R=\mathbb{R}+t^3 \mathbb{C}[[t^3,t^4,t^5]]=\mathbb{R}+\mathbb{C}t^3+t^6 \mathbb{C}[[t]] \subseteq \mathbb{C}[[t]]$, where $t$ is an indeterminate. By \cite[Theorem 5]{Se} a canonical module of $R$ included between $R$ and its integral closure is $\omega_R=\mathbb{C}+\mathbb{C}t+\mathbb{C}t^3+\mathbb{C}t^4+\mathbb{R}t^5+t^6\mathbb{C}[[t]]$ and $\omega_R^2=\mathbb{C}[[t]]$. Therefore, $\ell_R(\omega_R^2/\omega_R)=3$ and, then, $R$ is not 2-AGL. On the other hand, since $B=\mathbb{C}[[t^3,t^4,t^5]]$ and $R:\omega_R=(t^3,it^3)^2+(t^7,it^7,t^8,it^8)=t^6 \mathbb{C}[[t]]$, it is straightforward to check that $B$ is local with maximal ideal $\n=(R:\omega_R):\m=t^3 \mathbb{C}[[t]]$ and, thus, $R$ is gAGL, even if $\m^2=t^3 \m$. Clearly, in this case we have $R/\m \cong \mathbb{R} \ncong \mathbb{C} \cong B/\n$. We also notice that in this example $B$ is almost Gorenstein, since the associated numerical semigroup is almost symmetric. 
\end{example}

\section{The endomorphism algebra $\m:\m$}

As in the previous section, we assume that $(R,\m,k)$ is a one-dimensional Cohen-Macaulay local ring with $k$ infinite and that there exists a canonical module $\omega_R$ of $R$ such that $R \subseteq \omega_R \subseteq \overline{R}$. We denote by $\alpha$ a minimal reduction of $\m$ and we recall that $B$ denotes the algebra $\m:\m$.

\begin{proposition} \label{Canonical ideal of m:m}
Assume that $R$ is not a DVR and set $\omega_{B}=\alpha^{-1}(\omega_R : B)$. Then, $\omega_{B}$ is a canonical module of $B$ and $B \subseteq \omega_{B} \subseteq \overline{B}=\overline{R}$.
\end{proposition}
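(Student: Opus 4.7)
The plan is to build $\omega_B$ as a canonical module and then verify the two inclusions by a short computation rooted in one key identity.

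First, because $R \subseteq B \subseteq \overline{R}$ and $B$ is a module-finite extension of $R$ of the same Krull dimension, the standard duality $\omega_B \cong \Hom_R(B,\omega_R)$ identifies $\omega_R : B$ (inside $Q(R)=Q(B)$) as a canonical module of $B$. Multiplying by the unit $\alpha^{-1}\in Q(R)$ gives an isomorphic $B$-fractional ideal, so $\omega_B=\alpha^{-1}(\omega_R:B)$ is again a canonical module of $B$.

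The heart of the proof is the identity $\omega_R:B=\m\omega_R$. Since $R$ is not a DVR, a direct argument shows $R:\m=\m:\m=B$: if some $x\in (R:\m)\setminus(\m:\m)$ existed, then $xm$ would be a unit for some $m\in\m$, forcing $\m=(m)$ and hence $R$ to be a DVR. Using $\omega_R:\omega_R=R$ together with the colon identity $(A:B):C=A:(BC)$, one rewrites
\[
B=R:\m=(\omega_R:\omega_R):\m=\omega_R:(\m\omega_R),
\]
and reflexivity of the canonical module applied to the fractional ideal $I=\m\omega_R$ yields $\omega_R:B=\omega_R:(\omega_R:\m\omega_R)=\m\omega_R$. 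Thus $\omega_B=\alpha^{-1}\m\omega_R$.

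The two remaining inclusions now follow in one line each. For $B\subseteq\omega_B$: since $\alpha\in\m$ and $B\m\subseteq\m$ (as $B=\m:\m$), one has $\alpha B\subseteq\m\subseteq\m\omega_R$, i.e.\ $B\subseteq\alpha^{-1}\m\omega_R=\omega_B$. For $\omega_B\subseteq\overline{R}$: since $\alpha$ is a minimal reduction of $\m$, the ideal $\m$ is integral over $(\alpha)$, so $\alpha^{-1}\m\subseteq\overline{R}$; combined with $\omega_R\subseteq\overline{R}$ this gives $\omega_B=(\alpha^{-1}\m)\cdot\omega_R\subseteq\overline{R}\cdot\overline{R}=\overline{R}$. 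The equality $\overline{B}=\overline{R}$ is automatic from $R\subseteq B\subseteq\overline{R}$.

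The only non-formal step is the identification $\omega_R:B=\m\omega_R$, which packages together the non-DVR coincidence $R:\m=\m:\m$ and biduality of the canonical module; the inclusions $B\subseteq\omega_B\subseteq\overline{R}$ are then just the observations that $\alpha B\subseteq\m$ and that $\m/\alpha$ is integral over $R$.
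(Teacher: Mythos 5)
Your proof is correct and takes a genuinely different route from the paper's. The key difference is that you derive the explicit formula $\omega_R:B=\m\omega_R$ (equivalently, $\omega_B=\alpha^{-1}\m\omega_R$): since $R$ is not a DVR, $B=R:\m=(\omega_R:\omega_R):\m=\omega_R:(\m\omega_R)$, and biduality with respect to $\omega_R$ gives $\omega_R:B=\m\omega_R$. Once this formula is in hand, both inclusions $B\subseteq\omega_B\subseteq\overline{R}$ reduce to one-line observations ($\alpha B\subseteq\m$ and $\alpha^{-1}\m\subseteq\overline R$). The paper never writes down this formula; instead it verifies that $\omega_R:B$ is a canonical module of $B$ directly via the Herzog--Kunz length criterion (showing $\ell_B\bigl((\omega_R:B):\n/(\omega_R:B)\bigr)=1$ for all $\n\in\Max(B)$ by exhibiting no intermediate $B$-module), and then checks each inclusion by a separate colon/dualizing computation. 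You also replace the Herzog--Kunz criterion with the standard isomorphism $\omega_B\cong\Hom_R(B,\omega_R)$ for module-finite extensions of the same dimension, which is a more commonly invoked (but equivalent) tool. Your route is more streamlined and yields the reusable identity $\omega_B=\alpha^{-1}\m\omega_R$; the paper's route is more self-contained in that it avoids invoking biduality of fractional ideals and the $\Hom_R(B,\omega_R)$ description. Both arguments use the non-DVR hypothesis in the same essential place, namely to get $R:\m=\m:\m=B$.
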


\begin{proof} We first show that $\omega_R:B$ is a canonical module of $B$. It is easy to see that it is a fractional ideal of $B$, since it is a fractional ideal of $R$ and given $a \in (\omega_R:B), b, b' \in B$, we have $(ab)b'=a(bb') \in \omega_R$, that is $ab \in (\omega_R:B)$. Hence, by \cite[Satz 3.3]{HK}, it is enough to show that 
\[
\ell_{B}\left( \frac{(\omega_R:B):\n}{\omega_R:B}\right)=1
\]
for every maximal ideal $\n$ of $B$. On the other hand we have
\[
\ell_{B}\left( \frac{(\omega_R:B):\n}{\omega_R:B}\right)=\ell_{B}\left( \frac{\omega_R:(\n B)}{\omega_R:B}\right)=\ell_{B}\left( \frac{\omega_R:\n}{\omega_R:B}\right).
\]
Assume by contradiction that there exists a $B$-module $L$ such that $(\omega_R:B) \subsetneq L \subsetneq (\omega_R : \n)$. Clearly, $L$ is also an $R$-module and, then, $\n \subsetneq (\omega_R:L) \subsetneq B$. Moreover, $\omega_R : L$ is also a $B$-module, indeed given $a \in (\omega_R:L)$, $b \in B$ and $\ell \in L$, we have $(ab)\ell=a(\ell b) \in \omega_R$. This yields a contradiction because $\n$ is a maximal ideal of $B$. Therefore, $\omega_R:B$ is a canonical module of $B$ and obviously also $\omega_{B}$ is. 

Moreover, by $\alpha \in \m$, it follows that $\alpha B B \subseteq \m \subseteq \omega_R$ and, then, $B \subseteq \alpha^{-1}(\omega_R : B)= \omega_{B}$. So, we only need to prove that $\omega_R :B \subseteq \alpha \overline{R}$, which, dualizing with respect to $\omega_R$, is equivalent to $\omega_R : \alpha \overline{R} \subseteq B$, i.e. $(\omega_R:\overline{R}) \subseteq \alpha B$. Since $1 \in \omega_R$, it holds $\omega_R:\overline{R}=\omega_R:\omega_R \overline{R}=(\omega_R:\omega_R):\overline{R}=R:\overline{R}$. Moreover, if $y \in (R:\overline{R})$ and $m \in \m$, it follows $y\alpha^{-1}m \in R$ because $\alpha^{-1}m \in \overline{R}$.
Therefore, $y \in (R:\alpha^{-1}\m)=\alpha(R:\m)=\alpha B$. 
\end{proof}

The third part of the next proposition is a simpler proof of \cite[Theorem 5.1]{GMP}, whereas the first and the second part are related to \cite[Theorem 3.2 and Corollary 3.5]{B} which imply that, if $R$ is almost Gorenstein, then $\alpha^{-1}\m$ is a canonical ideal of $B$ and the converse holds if $R$ is analytically unramified and residually rational.

\begin{proposition} \label{Corollary of canonical ideal of m:m}
Let $\omega_{B}=\alpha^{-1}(\omega_R : B)$. The following hold: 
\begin{enumerate}
\item If $R$ is not a DVR, then $R$ is almost Gorenstein if and only if $\omega_{B}=\alpha^{-1}\m$;
\item $R$ is almost Gorenstein if and only if $\m$ is a canonical module of $B$;
\item The ring $B$ is Gorenstein if and only if $R$ is almost Gorenstein and $\alpha \m=\m^2$.
\end{enumerate}
\end{proposition}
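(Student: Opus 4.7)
The strategy for part (1) is to use the assumption that $R$ is not a DVR, which forces $B = \m : \m$ to coincide with $R : \m$. Then, rewriting $R : \m = (\omega_R : \omega_R) : \m = \omega_R : \omega_R\m$ and invoking the reflexivity of fractional ideals with respect to the canonical module, we obtain $\omega_R : B = \omega_R : (\omega_R : \omega_R\m) = \omega_R\m$. Therefore $\omega_B = \alpha^{-1}\omega_R\m$. Since $1 \in \omega_R$ gives $\m \subseteq \omega_R\m$ automatically, the identity $\omega_B = \alpha^{-1}\m$ is equivalent to $\omega_R\m = \m$, which is precisely the almost Gorenstein condition.

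For part (2), the forward direction follows from (1) in the non-DVR case (as $\alpha^{-1}\m \cong \m$ via multiplication by $\alpha$), and the DVR case is immediate since then $R=B$ is Gorenstein with $\m$ principal. For the reverse, the key observation is that if $\m$ is a canonical module of $B$, then $\m \cong \omega_B$, and as fractional $B$-ideals they differ by multiplication by a unit of $Q(R)$, i.e., $\omega_R\m = e\m$ for some $e \in Q(R)^{\times}$. From $\m \subseteq e\m$ we get $e^{-1} \in \m : \m = B$. Iterating the identity yields $\omega_R^n\m = e^n\m$ for every $n$. Since $\omega_R$ is generated over $R$ by finitely many elements of $\overline R$, the ring $R[\omega_R]$ is $R$-finite, so the chain $\omega_R \subseteq \omega_R^2 \subseteq \cdots$ stabilizes; hence $e^N\m = e^{N+1}\m$ for some $N$, which forces $e \in (e^N\m : e^N\m) = \m:\m = B$. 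Combined with $e^{-1} \in B$, this makes $e$ a unit of $B$, so $\omega_R\m = e\m = \m$ and $R$ is almost Gorenstein.

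For part (3), the implication $(\Leftarrow)$ follows by direct substitution using (1) together with the equivalence $\alpha\m = \m^2 \Leftrightarrow \m = \alpha B$: one computes $\omega_B = \alpha^{-1}\omega_R\m = \alpha^{-1}\m = B$. The harder direction $(\Rightarrow)$ is the main obstacle. Starting from $B$ Gorenstein, we know $\omega_B \cong B$ as $B$-modules, so $\omega_B = cB$ for some $c \in Q(R)^{\times}$. The containment $B \subseteq \omega_B$ yields $c^{-1} \in B$, while $\omega_B \subseteq \overline{B} = \overline{R}$ gives that $c$ is integral over $B$. Writing the integral equation $c^n + a_{n-1}c^{n-1} + \cdots + a_0 = 0$ with $a_i \in B$ and dividing through by $c^n$ produces $1 = c^{-1}\bigl(-a_{n-1} - a_{n-2}c^{-1} - \cdots - a_0 c^{-(n-1)}\bigr)$, whose right-hand factor lies in $B$ since $c^{-1} \in B$. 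Therefore $c^{-1}$ is a unit of $B$, so $c \in B^\times$ and $\omega_B = cB = B$. This forces $\omega_R\m = \alpha B$; combined with $\m \subseteq \omega_R\m = \alpha B \subseteq \m$ (the last inclusion because $\alpha \in \m$ and $\m$ is a $B$-ideal), we obtain $\m = \alpha B$, i.e.\ $\alpha\m = \m^2$, and $\omega_R\m = \m$, i.e.\ $R$ is almost Gorenstein. The crux is precisely this integrality-plus-sandwich argument, which rigidifies the abstract isomorphism $\omega_B \cong B$ into the concrete equality $\omega_B = B$ needed to extract both geometric conclusions.
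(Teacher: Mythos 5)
Your proof is correct, but for parts (2) and (3) you take a genuinely different route from the paper. Part (1) is essentially the paper's argument, just packaged via $\omega_R:B = \omega_R\m$ and reflexivity rather than the equivalent dualization $\omega_B=\alpha^{-1}\m \Leftrightarrow B=\omega_R:\m$.

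For part (2), the paper proves that $\ell_B(\omega/B)$ is independent of the choice of canonical module $\omega$ with $B\subseteq\omega\subseteq B[\omega_B]$ (by computing $\omega:B[\omega_B]=B:B[\omega_B]$), and then concludes $\alpha^{-1}\m=\omega_B$ from the inclusion $\alpha^{-1}\m\subseteq\omega_B$ plus equality of lengths. You instead write $\omega_B=e\m$ for a unit $e$ of $Q(B)$, observe $e^{-1}\in B$, iterate to $\omega_R^n\m=e^n\m$, and use that $R[\omega_R]$ is $R$-finite to stabilize the chain and land $e\in B$, hence $e\in B^\times$ and $\omega_R\m=\m$. The paper's approach is slicker in that the length invariance does double duty --- it also quietly justifies the step ``$B$ Gorenstein $\Rightarrow B=\omega_B$'' used in their proof of (3). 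Your approach pays for that separately in (3) with the integrality-sandwich argument ($c^{-1}\in B$ from $B\subseteq cB$, $c$ integral from $cB\subseteq\overline{B}$, divide the monic equation by $c^n$ to see $c^{-1}\in B^\times$), which is a clean, self-contained way to rigidify $\omega_B\cong B$ to $\omega_B=B$. Both are valid; yours is more elementary and makes explicit a point that the paper leaves a bit terse.

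Two small remarks. First, in (2) you write $\omega_R\m=e\m$, but the isomorphism $\m\cong\omega_B$ actually gives $\alpha^{-1}\omega_R\m=e\m$; renaming $\alpha e$ as $e$ fixes this and the rest goes through unchanged, but it is worth being precise. Second, you explicitly dispatch the DVR case only in the forward direction of (2); the reverse direction of (2) and all of (3) also implicitly use the formula $\omega_B=\alpha^{-1}(\omega_R:B)$ from Proposition \ref{Canonical ideal of m:m}, which is stated only for non-DVR $R$, so a one-line remark that the DVR case is trivial throughout (as the paper does at the start of its proof of (3)) would close that gap.
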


\begin{proof}
(1) It holds
\[\omega_{B}=\alpha^{-1}\m \ \Longleftrightarrow \ \omega_R:B=\m \ \Longleftrightarrow  B=\omega_R:\m.
\]
If $R$ is almost Gorenstein, then $\omega_R \m=\m$ implies $\omega_R:\m=\omega_R:\omega_R\m= (\omega_R:\omega_R):\m=R:\m=B$.
Conversely, we have $\omega_R \subseteq (\omega_R:\m)=B$ and, then, $\omega_R \m\subseteq \m$. \\
(2) We can assume that $R$ is not a DVR and by (1) it is enough to show that if $\m$ is a canonical ideal of $B$, then $\omega_B=\alpha^{-1}\m$.
We first note that $B[\omega_B]$ is a finitely generated $B$-algebra and it is included in $\overline{B}$, then it is also a finitely generated $B$-module. Let $\omega$ be a canonical module of $B$ such that $B\subseteq \omega \subseteq B[\omega_B]$. Then, $(\omega:B[\omega_B])=(\omega:\omega B[\omega_B])=((\omega:\omega):B[\omega_B])=(B:B[\omega_B])$. It follows that $\ell_B(B[\omega_B]/\omega)=\ell_B((\omega:\omega)/(\omega:B[\omega_B]))=\ell_B(B/(B:B[\omega_B]))$ and
$\ell_B(B[\omega_B]/B)=\ell_B(B[\omega_B]/\omega)+\ell_B(\omega/B)=\ell_B(B/(B:B[\omega_B]))+\ell_B(\omega/B)$. In particular, $\ell_B(\omega/B)$ does not depend on the choice of $\omega$. Since $\alpha^{-1}\m$ is a canonical ideal of $B$ and $B \subseteq \alpha^{-1}\m \subseteq \omega_B \subseteq B[\omega_B]$, it follows that $\ell_B(\alpha^{-1}\m/B)=\ell_B(\omega_B/B)$ and, thus, $\omega_B=\alpha^{-1}\m$. \\
(3) We assume that $R$ is not a DVR or the statement is trivially true.
If $B$ is Gorenstein, then $B=\omega_{B} =\alpha^{-1} \omega_{R}:B$. It follows $B=(\omega_R :\alpha B) \supseteq (\omega_R : \m) \supseteq (\omega_R :R)=\omega_R$. Thus, $\omega_R \m \subseteq \m$ and, then, $R$ is almost Gorenstein. Moreover, (1) implies $B=\omega_{B}= \alpha^{-1}\m$, which is equivalent to $\alpha\m=\m^2$.
Conversely, we immediately get that $\omega_{B}=\alpha^{-1}\m=B$ and, then, $B$ is Gorenstein.
\end{proof}

In the rest of the paper we fix $\omega_B = \alpha^{-1}(\omega_R:B)$.

\begin{lemma} \label{m:n contains omega:n}
If $R$ is not a DVR, the following properties hold: 
\begin{enumerate}
\item A reduction of $(\omega_{B}: \alpha^{-1}\m)_{\n}$ is $1$ for every $\n \in \Max (B)$;
\item $\alpha^{-1}\m$ is an almost canonical ideal of $B$ if and only if $(\omega_R:J(B)) \subseteq (\m:J(B))$.
\end{enumerate}
\end{lemma}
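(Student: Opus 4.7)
The plan is to reduce both parts of the lemma to standard identities involving duals and colons, exploiting the key computation $\omega_B:\alpha^{-1}\m = \omega_R:\m$, which follows from the definition $\omega_B=\alpha^{-1}(\omega_R:B)$ together with the identity $\m B=\m$ (since $B=\m:\m$ contains $1$ and multiplies $\m$ into itself). Explicitly,
\[
\omega_B:\alpha^{-1}\m \;=\; (\omega_R:B):\m \;=\; \omega_R:(\m B)\;=\;\omega_R:\m.
\]
Once this is in hand, part (1) becomes an instance of Lemma \ref{z=1} applied in $B$, and part (2) is a direct application of Corollary \ref{Characterization almost canonical ideal} combined with a short manipulation of colons.

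For part (1), I would verify the two hypotheses of Lemma \ref{z=1} for the $B$-ideal $\omega_R:\m$. First, $1\in\omega_R:\m$ is immediate from $\m\subseteq R\subseteq \omega_R$. Second, one needs $\omega_R:\m\subseteq\overline B=\overline R$; this follows from $\omega_R:B=\alpha\omega_B\subseteq\alpha\overline B$ (by Proposition \ref{Canonical ideal of m:m}) together with the rewriting $\omega_R:\m=(\omega_R:B):\m$: if $r(\omega_R:\m)$, then $r\alpha\in r\m\subseteq \omega_R:B\subseteq\alpha\overline B$, hence $r\in\overline B$. Lemma \ref{z=1} then furnishes $z=1$ as a reduction of $(\omega_B:\alpha^{-1}\m)_\n$ at every $\n\in\Max(B)$.

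For part (2), with the reduction choice $z=1$ justified by part (1), Corollary \ref{Characterization almost canonical ideal} applied to $B$ with $I=\alpha^{-1}\m$ says that $\alpha^{-1}\m$ is almost canonical in $B$ if and only if $\omega_B:J(B)=\alpha^{-1}\m:J(B)$. Using $\omega_B=\alpha^{-1}(\omega_R:B)$ and $BJ(B)=J(B)$,
\[
\omega_B:J(B)=\alpha^{-1}\bigl((\omega_R:B):J(B)\bigr)=\alpha^{-1}(\omega_R:J(B)),
\qquad \alpha^{-1}\m:J(B)=\alpha^{-1}(\m:J(B)).
\]
Clearing $\alpha^{-1}$, the almost canonical condition becomes $\omega_R:J(B)=\m:J(B)$. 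Since $\m\subseteq\omega_R$, the inclusion $\m:J(B)\subseteq\omega_R:J(B)$ always holds; hence the equality is equivalent to $(\omega_R:J(B))\subseteq(\m:J(B))$, which is exactly what the statement asserts.

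There is no conceptually hard step here; the only real pitfall is bookkeeping with the colons and the factor $\alpha$. In particular, one must be careful to use that $J(B)$ is a $B$-module (so $BJ(B)=J(B)$) and that $\m B=\m$, since these are the facts that let the $B$-structure disappear from the colon computations and reduce everything to the $R$-side identity $\omega_R:\m=(\omega_R:B):\m$.
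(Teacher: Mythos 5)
Your proof is correct and follows essentially the same route as the paper: both hinge on the identity $\omega_B:\alpha^{-1}\m=\omega_R:\m$ obtained via $\m B=\m$, on Lemma~\ref{z=1} for part (1), and on Corollary~\ref{Characterization almost canonical ideal} for part (2). Two tiny differences: the paper verifies $\omega_B:\alpha^{-1}\m\subseteq\overline B$ more directly (from $\alpha\in\m$ one gets $1\in\alpha^{-1}\m$, so the colon is contained in $\omega_B\subseteq\overline B$, no $\alpha$-clearing needed), and the paper explicitly remarks that $\omega_B$ (hence $\omega_B:\alpha^{-1}\m$) is finitely generated so that Lemma~\ref{z=1} applies — a point you leave implicit.
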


\begin{proof}
(1) Since $\alpha \in \m$, it follows that $(\omega_{B}:\alpha^{-1}\m) \subseteq \omega_{B} \subseteq \overline{B}$.
Moreover, the $B$-module $\omega_{B}$ is finitely generated, since it is a canonical module, so also $\omega_{B}:\alpha^{-1}\m$ is finitely generated. Finally we get $\omega_{B}:\alpha^{-1}\m=\alpha^{-1}(\omega_R:B):\alpha^{-1}\m=\omega_R:\m B=\omega_R:\m$ and, thus, $1\in \omega_{B}:\alpha^{-1}\m$. Now it is enough to apply Lemma \ref{z=1}.  \\
(2) We note that $\m \subseteq (\omega_R:B)$ and, then, $\alpha^{-1}\m \subseteq \omega_B$. Therefore, by (1) and by Corollary \ref{Characterization almost canonical ideal}, the ideal $\alpha^{-1}\m$ is almost canonical if and only if $ (\omega_{B}:J(B)) \subseteq (\alpha^{-1}\m:J(B))$, since the other inclusion is always true. Moreover, $\omega_{B} : J(B)= \alpha^{-1} (\omega_R:B):J(B)=\alpha^{-1}(\omega_R: J(B)B)=\alpha^{-1}(\omega_R:J(B))$ and the thesis follows. 
\end{proof}

We are ready to prove the main result of this section.

\begin{theorem} \label{main}
The ring $R$ is {\rm gAGL} if and only if $\m$ is an almost canonical ideal of $B$.
\end{theorem}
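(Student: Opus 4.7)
First handle the trivial case where $R$ is a DVR: then $B=R$, $J(B)=\m$, and both sides hold automatically ($R$ is gAGL by Remark \ref{Almost Gorenstein are gAGL}.1, and the principal ideal $\m$ is canonical, hence almost canonical, in $B$). So assume $R$ is not a DVR. For the almost canonical side, Remark \ref{First remark}.3 lets one replace $\m$ by $\alpha^{-1}\m$; Lemma \ref{m:n contains omega:n}(1) supplies the reduction $z=1$ hypothesis needed to apply Corollary \ref{Characterization almost canonical ideal} to the ring $B$, which gives that $\alpha^{-1}\m$ is almost canonical in $B$ if and only if $J(B)\omega_B\subseteq\alpha^{-1}\m$. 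Multiplying through by $\alpha$ and using $\alpha\omega_B=\omega_R:B$ from Proposition \ref{Canonical ideal of m:m}, this puts the almost canonical side into the normal form $J(B)(\omega_R:B)\subseteq\m$.

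For gAGL, unfolding the definition gives $J(B)\m\omega_R\subseteq R$; since $R:\m=B$ for $R$ not a DVR, this rewrites as $J(B)\omega_R\subseteq B$. I would then dualize with respect to $\omega_R$, using $\omega_R:\omega_R=R$ together with the reflexivity $\omega_R:(\omega_R:I)=I$ for fractional ideals containing a regular element. The containment $J(B)\omega_R\subseteq B$ becomes
\[
\omega_R:B\subseteq\omega_R:\bigl(J(B)\omega_R\bigr)=(\omega_R:\omega_R):J(B)=R:J(B),
\]
which is in turn equivalent to $(\omega_R:B)J(B)\subseteq R$.

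To match this with the almost canonical normal form one must bridge the gap between $R$ and $\m$. The product $(\omega_R:B)J(B)$ is a $B$-module (both factors are), and any $B$-submodule of $R$ is contained in the conductor $R:B$, which in turn lies in $\m$: otherwise $R:B$ would contain a unit of $R$, forcing $B\subseteq R$ and hence $B=R$, so $\m:\m=R$ and $R$ would be a DVR. Therefore $(\omega_R:B)J(B)\subseteq R$ is equivalent to $(\omega_R:B)J(B)\subseteq\m$, and stringing together all the equivalences yields the theorem. The main obstacle is the dualization step together with the need to bridge the $R$-versus-$\m$ discrepancy via the conductor argument, which is precisely where the hypothesis that $R$ is not a DVR is essential.
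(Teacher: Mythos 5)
Your proof is correct and takes a genuinely different, more symmetric route than the paper's. The paper treats the two implications asymmetrically: for the forward direction it establishes $(\omega_R:B)(B:\omega_R)\subseteq\m$ when $R$ is not almost Gorenstein by ruling out units, then threads the gAGL hypothesis $J(B)\subseteq(R:\omega_R):\m$ through a chain of colon identities to land on $\omega_B\subseteq(\alpha^{-1}\m:J(B))$; for the reverse direction it first proves $\m^2\subseteq(R:\omega_R)$ using Lemma \ref{m:n contains omega:n}(2) so that Lemma \ref{Def gAGL} applies, and then argues by contradiction on a minimal generating set of $\m$. You instead reduce both sides of the biconditional to the single condition $(\omega_R:B)J(B)\subseteq\m$: the almost-canonical side via Corollary \ref{Characterization almost canonical ideal}(3), Lemma \ref{m:n contains omega:n}(1), and $\alpha\omega_B=\omega_R:B$; the gAGL side by rewriting the definition as $J(B)\omega_R\subseteq B$ (using $R:\m=B$ for non-DVR $R$), dualizing with respect to $\omega_R$, and then observing that the $B$-module $(\omega_R:B)J(B)$, once inside $R$, must lie in the conductor $R:B\subseteq\m$. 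This buys a case-free, fully reversible argument that makes the equivalence transparent; the cost is an explicit invocation of the reflexivity $B=\omega_R:(\omega_R:B)$, which the paper's reverse direction avoids by working at the level of generators via Lemma \ref{Def gAGL}. One small simplification is available in your dualization step: reflexivity of the ideal $J(B)\omega_R$ is not actually needed, since substituting $B=\omega_R:(\omega_R:B)$ and $R=\omega_R:\omega_R$ and moving colons already yields $J(B)\omega_R\subseteq B\Leftrightarrow J(B)\omega_R(\omega_R:B)\subseteq\omega_R\Leftrightarrow(\omega_R:B)J(B)\subseteq R$ directly.
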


\begin{proof}  
We recall that $\m$ is an almost canonical ideal of $B$ if and only if $\alpha^{-1}\m$ is, see Remark \ref{First remark}.3.\\
Assume first that $R$ is gAGL. By Proposition \ref{Corollary of canonical ideal of m:m} we can assume that $R$ is not almost Gorenstein, otherwise $\alpha^{-1}\m$ is a canonical ideal of $B$. By the previous lemma and Corollary \ref{Characterization almost canonical ideal}(3) we only need to prove that $J(B) \omega_B \subseteq \alpha^{-1}\m$, i.e. $\omega_B \subseteq (\alpha^{-1}\m:J(B))$. Moreover, since $R$ is gAGL we have
\begin{gather*}
(\omega_R:B)(B:\omega_R) \subseteq \m \Longleftrightarrow  (\omega_R:B) \subseteq (\m:(B:\omega_R)) \Longleftrightarrow 
(\omega_R:B) \subseteq (\m:((R:\m):\omega_R)) \Longleftrightarrow \\ \alpha^{-1}(\omega_R:B) \subseteq (\alpha^{-1}\m:((R:\omega_R):\m))  \Longrightarrow \omega_B \subseteq (\alpha^{-1}\m:J(B)) \Longrightarrow \alpha^{-1}\m \text{ is almost canonical}.
\end{gather*}

Since $(\omega_R:B)(B:\omega_R) \subseteq \omega_R:\omega_R=R$, it is enough to show that there are no units of $R$ in $(\omega_R:B)(B:\omega_R)$. If there exist $x \in (\omega_R:B)$ and $y \in (B:\omega_R)$ such that $xyu=1$ for some unit $u$ of $R$, then $xB \subseteq \omega_R$, $y\omega_R \subseteq B$ and $y^{-1}=xu$ imply that $xB \subseteq \omega_R \subseteq xuB$. Since $u$ is also a unit of $B$, it follows $uB=B$ and $\omega_R=xB$. In particular, we get $\omega_R:B=xB:B=xB$ and, then, $\omega_B=\alpha^{-1}(\omega_R:B)=\alpha^{-1}xB$. Since $\alpha^{-1} x$ is invertible in $Q(R)$, $B$ is Gorenstein. Hence, Proposition \ref{Corollary of canonical ideal of m:m} implies that $R$ is almost Gorenstein, which is a contradiction. \\
Conversely, we can assume that $R$ is not Gorenstein.
We prove first that $\m^2 \subseteq (R:\omega_R) \subseteq \m$. The second inclusion follows by  $1 \in \omega_R$ and $R \subsetneq \omega_R$. As for the first one, it is enough to consider the elements $m_1 m_2$ for $m_1, m_2 \in \m$. If $\lambda \in \omega_R$, then $m_2 \lambda \in (\omega_R : J(B)) \subseteq (\m:J(B))$ by Lemma \ref{m:n contains omega:n}. Moreover, $m_1 \in \m \subseteq J(B)$ and, then, $m_1 m_2 \lambda \in \m \subseteq R$, which implies $m_1 m_2 \in (R: \omega_R)$. This means that $\m^2 \subseteq (R:\omega_R)$ and, then, $R:\omega_R=(x_1, \dots, x_r)^2+ (x_{r+1}, \dots, x_n)$ for some integer $r$, where $x_1, \dots x_n$  is a minimal system of generators of $\m$. 

By Lemma \ref{Def gAGL} we only need to show that there are no $x \in J(B)$ such that  $x_i=x x_j$ for some $1 \leq i,j \leq r$. 
Assume by contradiction that this is not true. Since $x_i \notin (R:\omega_R)$, there exists $\lambda \in \omega_R$ such that $x_i \lambda \notin R$. Moreover, for every $\gamma \in J(B) \subseteq B$ we have $\gamma x_j \in \m$ and, then, $\gamma x_j \lambda \in \omega_R$ implies $x_j \lambda \in (\omega_R : J(B)) \subseteq (\m :J(B))$ again by Lemma \ref{m:n contains omega:n}. Consequently, $x_i \lambda = x x_j \lambda \in \m \subseteq R$ yields a contradiction. 
\end{proof}

\begin{remark} \rm \label{independence}
Since being almost canonical does not depend on the choice of the canonical module, Theorem \ref{main} implies that the gAGL property is independent of the chosen canonical module. 
\end{remark}


\begin{corollary} \label{final corollary}
If $B$ is almost Gorenstein, then $R$ is {\rm gAGL}. The converse holds if $R$ has minimal multiplicity.
\end{corollary}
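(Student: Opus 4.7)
The plan is to reduce both implications to a single inclusion between $B$ and $\alpha^{-1}\m$, once the two ``almost Gorenstein''-type conditions have been translated into inclusions involving $J(B)\omega_B$ via the characterizations already established.

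First I would dispose of the trivial DVR case: if $R$ is a DVR, then $R=B$ is Gorenstein, so both $R$ gAGL (by Remark \ref{Almost Gorenstein are gAGL}.1) and $B$ almost Gorenstein hold. Assume from now on that $R$ is not a DVR, so that Proposition \ref{Canonical ideal of m:m} provides the canonical module $\omega_B=\alpha^{-1}(\omega_R:B)$ with $B\subseteq \omega_B\subseteq\overline{B}$.

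Next I would translate both sides into containments of $J(B)\omega_B$ in suitable ideals of $B$. By Corollary \ref{almost canonical and almost Gorenstein}, $B$ is almost Gorenstein if and only if $B$ is almost canonical as an ideal of itself; since $1\in \omega_B\subseteq\overline{B}$, Lemma \ref{z=1} shows that $z=1$ is a reduction of $\omega_B:B=\omega_B$ at every maximal ideal of $B$, so Corollary \ref{Characterization almost canonical ideal}(3) gives
\[
B \text{ is almost Gorenstein} \iff J(B)\omega_B \subseteq B.
\]
On the other hand, Theorem \ref{main} together with Remark \ref{First remark}.3 says that $R$ is gAGL if and only if $\alpha^{-1}\m$ is an almost canonical ideal of $B$; Lemma \ref{m:n contains omega:n}(1) supplies again the reduction $z=1$, and Corollary \ref{Characterization almost canonical ideal}(3) yields
\[
R \text{ is gAGL} \iff J(B)\omega_B \subseteq \alpha^{-1}\m.
\]

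The heart of the argument is then the observation that $B\subseteq \alpha^{-1}\m$, with equality exactly under minimal multiplicity. Indeed, $\alpha\in\m$ and $(\m:\m)\m\subseteq\m$ give $\alpha B\subseteq\m$, i.e.\ $B\subseteq\alpha^{-1}\m$; conversely, $\alpha^{-1}\m\subseteq B$ is equivalent to $\m^{2}\subseteq\alpha\m$, which is precisely the minimal multiplicity condition. From this the corollary is immediate. The forward implication, which holds without any hypothesis on the multiplicity, follows from the chain $J(B)\omega_B\subseteq B\subseteq\alpha^{-1}\m$, so $R$ is gAGL. For the converse, if $R$ has minimal multiplicity then $\alpha^{-1}\m=B$, and the assumption $R$ gAGL gives $J(B)\omega_B\subseteq\alpha^{-1}\m=B$, i.e.\ $B$ is almost Gorenstein.

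I do not anticipate a genuine obstacle: the only points requiring care are the hypotheses of Corollary \ref{Characterization almost canonical ideal}, namely that $B$ is semilocal (which is automatic because $B$ is module-finite over the local ring $R$) and that one can take $z=1$ (which is exactly what Lemmas \ref{z=1} and \ref{m:n contains omega:n}(1) give). Everything else is a direct application of the results of Sections \ref{Section ideal} and the present section.
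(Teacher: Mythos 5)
Your proof is correct and follows essentially the same route as the paper: both hinge on the chain $B\subseteq\alpha^{-1}\m\subseteq\omega_B$, reduce via Theorem \ref{main}, Remark \ref{First remark}.3 and Corollary \ref{almost canonical and almost Gorenstein}, and use that $B=\alpha^{-1}\m$ exactly when $R$ has minimal multiplicity. The only cosmetic difference is that you make the comparison explicit by passing to the inclusions $J(B)\omega_B\subseteq B$ and $J(B)\omega_B\subseteq\alpha^{-1}\m$ via Corollary \ref{Characterization almost canonical ideal}(3), whereas the paper invokes the monotonicity statement Remark \ref{containment between fractional ideals} for the forward direction; these are two ways of packaging the same observation.
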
 

\begin{proof} We note that $B \subseteq \alpha^{-1} \m$ are fractional ideals of $B$ and the inclusion $\alpha^{-1}\m \subseteq \alpha^{-1}(\omega_R:B)=\omega_B$ implies that $B \subseteq (\omega_B:(\alpha^{-1} \m)) \subseteq (\omega_B:B)=\omega_B \subseteq \overline{B}$. 
Therefore, if $B$ is almost Gorenstein, Corollary \ref{almost canonical and almost Gorenstein} and Remark \ref{containment between fractional ideals} imply that $\alpha^{-1} \m$ is an almost canonical ideal of $B$ and, thus, the first statement follows by the previous theorem and Remark \ref{First remark}.3.
As for the last part it is enough to recall that $B$ is almost Gorenstein if and only if $B=\alpha^{-1}\m$ is an almost canonical ideal.
\end{proof}

By Proposition \ref{2-AGL and gAGL}, if $R$ has minimal multiplicity, $(B,\n)$ is local and $B/\n \cong R/\m$, the previous corollary says that $B$ is almost Gorenstein if and only if $R$ is 2-AGL, that is \cite[Corollary 5.3]{CGKM}. Therefore, it gives another proof of this fact. On the other hand it generalizes \cite[Corollary 5.3]{CGKM} even if $B$ is local or $B/\n \cong R/\m$ for every $\n \in \Max(B)$. For instance in both Example \ref{Example 2} and Example \ref{Example 1} the ring $B$ is almost Gorenstein local and $R$ is gAGL but not 2-AGL.

\begin{example} \rm
{\bf 1.} If $R$ has not minimal multiplicity, the converse of Corollary \ref{final corollary} does not hold. For example the semigroup ring $k[[t^6,t^7,t^{15},t^{17}]]$ is 2-AGL and then g-AGL, but $B=k[[t^6,t^7,t^8,t^{11}]]$ is not almost Gorenstein. \\
{\bf 2.} By Proposition \ref{Corollary of canonical ideal of m:m} the Gorensteinness of $B$ implies that $R$ has minimal multiplicity. This is no longer true if $B$ is only almost Gorenstein. For instance the gAGL ring $R=k[[t^5,t^6,t^9,t^{13}]]$ has multiplicity $5$ and embedding dimension $4$ and $B=k[[t^4,t^5,t^6,t^7]]$ is almost Gorenstein. 
\end{example}

\end{document}